\newtheorem{lemma}{Lemma}[section]
\newtheorem{theorem}[lemma]{Theorem}
\newtheorem{corollary}[lemma]{Corollary}
\newtheorem{proposition}[lemma]{Proposition}
\newtheorem{definition}[lemma]{Definition}
\def\C{{\mathbb C}}
\def\Z{{\mathbb Z}}
\def\1{{\bf 1}}
\def \w{\omega}
\def\CC{{\mathcal{C}}}
\def \Or{{\bf O}}
\def \Hom{{\rm Hom}}
\def \KW {{\rm KW}}
\def\DD{{\mathcal{D}}}
\def\id{{\rm id}}
\begin{document}
	\title{A unitary vertex operator algebra arising from the $3C$-algebra}
	
	\author{Xiangyu Jiao}
	\affil{School of Mathematical Sciences,  East China Normal University, Shanghai 200241, China \\
		Key Laboratory of MEA (Ministry of Education),  East China Normal University, Shanghai 200241, China \\
		Shanghai Key Laboratory of PMMP,  East China Normal University, Shanghai 200241, China}
	\author{Wen Zheng
		\footnote{Corresponding author\\
			Email address: $wzheng14@qdu.edu.cn$\\
			Xiangyu Jiao is supported by the  Science and Technology Commission of Shanghai Municipality (Grant No. 23ZR1418600, No. 22DZ2229014).
			Wen Zheng is supported by the NSFC No.~12201334 and the National Science Foundation of Shandong Province No.~ZR2022QA023.}}
	\affil{School of Mathematics and Statistics,
		Qingdao University, 266071, China\\
		Qingdao International Mathematics Center, Qingdao, 266071, China}
	
	\maketitle
	
	\begin{abstract}

We give an algebraic proof of the unitarity of the vertex operator algebra $L(21/22, 0)\oplus L(21/22, 8)$ and of all its irreducible ordinary modules, using a coset realization arising from the $3C$-algebra. Motivated by the structure of the resulting module decomposition, we establish a general result on fusion rules for commutant vertex operator subalgebras within the framework of modular tensor categories. As an application of this general result, we explicitly determine the  fusion rules of all irreducible $L(21/22, 0)\oplus L(21/22, 8)$-modules.

	\end{abstract}
	
	\setcounter{tocdepth}{2}
	\tableofcontents

	\newpage
	
	\section{Introduction}
	The definition of a unitary vertex operator algebra was first introduced in \cite{DLin1}. Roughly speaking, a vertex operator algebra (VOA for short) is called unitary if it is equipped with a positive definite Hermitian form that is compatible with the vertex operator structure. There are two main motivations for introducing this notion. One arises from its close relationship with unitary representations of infinite dimensional Lie algebras, while the second arises from its fundamental role in relating the algebraic formulation of two-dimensional conformal field theory via vertex operator algebras to the analytic approach based on conformal nets. We refer the reader to \cite{DLin1} for a detailed discussion. For other approaches to unitary VOAs, see B. Gui's works \cite{Gui1, Gui2, Gui3, Gui4} and the references therein.
	
	It was shown in \cite{DLin1} that many well-known rational VOAs, such as the unitary series of Virasoro VOAs and lattice VOAs associated to positive definite even lattices, are unitary VOAs. In \cite{DLin2}, preunitary VOAs with central charge $c<1$ were classified, yielding a one to one correspondence with the classification of conformal nets with $c<1$ \cite{KL}. In the same work, it was proved there that all such preunitary vertex operator algebras are unitary except for two cases $L(c_9, 0)\oplus L(c_9, h_{1,7}^9)$ and $L(c_{27}, 0)\oplus L(c_{27},h_{1,11}^{27})\oplus L(c_{27},h_{1,19}^{27})\oplus L(c_{27},h_{1,29}^{27})$,
	 whose unitarity remained open from a purely vertex operator algebraic perspective. To bridge these algebraic and analytic classifications, Gui \cite{Gui3} established a fundamental relation between unitary VOA extensions  and Q-systems \cite{Longo} in $C^*$-tensor categories. This framework allowed him to prove that the two exceptional VOAs above are indeed unitary, thereby  completing the classification of unitary VOAs with $c<1$.  See also \cite{CGGH}.
 Moreover, Gui's work also shows that all such VOAs are completely unitary \cite{Gui3}, which in particular implies that all their irreducible ordinary modules are unitary modules.
	
	 The main purpose of the present paper is twofold. First, we give an independent and purely algebraic proof of the unitarity of the vertex operator algebra $L(21/22, 0)\oplus L(21/22, 8)$ and of all its irreducible ordinary modules. Our approach is based on a concrete coset realization arising from the $3C$-algebra \cite{LYY}. Second, motivated by a structural feature observed in the explicit decomposition of the $3C$-algebra, we establish a general result on fusion rules for coset vertex operator subalgebras within the framework of modular tensor categories, building in particular on the methods developed in
\cite{DRX2}.

	The paper is organized as follows: In Section 2, we briefly recall the construction of the $3C$-algebra and its irreducible modules following \cite{LYY}.
	In Section 3, we review the notions of unitary VOAs and  unitary modules introduced in \cite{DLin1}, and then we use the concrete construction of $3C$-algebra to show that the VOA $L(21/22, 0)\oplus L(21/22, 8)$ and all  its irreducible modules are unitary.
	Section 4 is devoted to fusion rules: after recalling necessary notions such as the Kac-Wakimoto set and results from \cite{DRX2}, we establish our result on fusion rules for commutant subalgebras.  As a corollary, we derive the fusion rules of irreducible $L(21/22, 0)\oplus L(21/22, 8)$-modules.
	
	We assume that the reader is familiar with the basic theory of vertex operator algebras (cf. \cite{FHL}, \cite{FLM}, \cite{LL}) and modular tensor categories (cf. \cite{EGNO, KO}).
	
	\section{Constructions of the $3C$-algebra and its irreducible modules}
	In this section, we briefly recall several standard notions of vertex operator algebras used throughout the paper, and then review the construction of the 3C-algebra and its irreducible modules following \cite{LYY}.
	
	A vertex operator algebra $V$ is said to be \emph{rational} if every admissible $V$-module is completely reducible. Rationality plays a fundamental role in the study of fusion rules and modular invariance in vertex operator algebra theory. Moreover, $V$ is called $C_2$-cofinite if the quotient space $V/{C_2(V)}$ where $$C_2(V)=\text{span}\{u_{-2}v\mid u,v\in V\}$$ is finite dimensional.
	Rationality and the $C_2$-cofiniteness condition ensure good finiteness properties of modules and are closely related to the modularity of characters. They also lead to many important results in VOA theory.

	Let $(V, Y, \1, \w)$ be a vertex operator algebra and
	$(U, Y, \1, \w')$ is a vertex operator subalgebra of $V$. Set $\w^{''}=\w-\w'$, $Y(\w',z)=\sum_{i\in \Z}L'(i)z^{-i-2} $ and $Y(\w'',z)=\sum_{i\in \Z}L''(i)z^{-i-2}$. Note that $L'(0)\mid_U=L(0)\mid_U$. The \emph{coset (or commutant)} $U^c$ of $U$ is defined to be
	$$U^c=\{u\in V\mid v_n u=0, v\in U,n\geq 0\}$$
	(see \cite{FZ}). $U^c$ can be viewed as the space of vacuum-like vectors for $U$ \cite{L1}:
	$$U^c=\{u\in V\mid L'(-1)u=0\}.$$
	It is well known that $(U^c, Y, \1, \w-\w')$ is a vertex operator subalgebra of
	$V$ \cite{FZ,LL}.

	Let $L(c_m, 0)$ denote the simple vertex operator algebra associated to the Virasoro algebra with central charge
	\begin{equation}\label{eq:VIRC}
		c=c_m=1-\frac{6}{(m+2)(m+3)}, \qquad m=1,2,3, \dots.
	\end{equation}
	The irreducible modules of the VOA $L(c_m,0)$ are given by the Virasoro minimal models $L(c_m,h^m_{r,s})$, where
	\begin{equation}\label{eq:VIRH}
		h^m_{r,s} = \frac{(r(m+3)-s(m+2))^2 - 1}{4(m+2)(m+3)},\qquad 1 \le
		r \le m+1,\quad 1 \le s \le m+2.
	\end{equation}
	Note that $h^m_{r,s} = h^m_{m+2-r,m+3-s}$ and that
	$L(c_m,h^m_{r,s})$, $1 \le s \le r \le m+1$ are all
	inequivalent irreducible $L(c_m,0)$-modules.
	Let  $V_{\sqrt{2}E_8}$ be the lattice VOA associated to the lattice $\sqrt{2}E_8$ and $\alpha_1,\alpha_2,\dots,\alpha_8$ be the simple roots of $E_8$. Following the convention in \cite{LYY}, let $-\alpha_0$ be the highest root, then we have:
	\begin{equation}\label{eq:A_0}
		\alpha_0+2\alpha_1+3\alpha_2+4\alpha_3+ 5\alpha_4+6\alpha_5+4\alpha_6+2\alpha_7+3\alpha_8 =
		0.
	\end{equation}
	Let $L$ be the sublattice of $E_8$ root lattice generated by
	$\alpha_j, 0 \le j \le 7$. Then $L\cong A_8$. Let $\Phi$ be the root system of $L$, $h$ the Coxeter number of $\Phi$, $\omega$ the Virasoro element of $V_{\sqrt{2}L}$, which is also the Virasoro element of $V_{\sqrt{2}E_8}$. We define two vectors in $V_{\sqrt{2}E_8}$ as follows:
	\begin{equation}\label{sw}
		\begin{split}
			s=s(\Phi)&=\frac{1}{2(h+2)}\sum_{\alpha\in \Phi^+}\left(
			\alpha(-1)^2\cdot
			1 -2(e^{\sqrt{2}\alpha}+ e^{-\sqrt{2}\alpha})\right),\\
			\tilde{\omega}=\tilde{\omega}(\Phi)&= \omega - s.
		\end{split}
	\end{equation}
	It is shown in \cite{DLMN} that $\tilde{\omega}$ and $s$ are mutually
	orthogonal conformal vectors and the central charge of
	$\tilde{\omega}$ is $16/11$.  Define $U$ to be the coset (or commutant) VOA associated with the conformal vector $s$.
	\begin{equation}\label{3c}
		U=\{v\in V_{\sqrt{2}E_8}\,| \, s_1 v=0 \}.
	\end{equation}
	This VOA is known as the $3C$-algebra introduced in \cite{LYY},  which is related to the $3C$ conjugacy class of the product of two $2A$-involutions of the Monster group \cite{C}  and can be decomposed into irreducible $L(\frac{1}2,0)\otimes L(\frac{21}{22},0)$-module:
	\[
	\begin{split}
		U_{3C}
		& \cong  L(\frac{1}2,0)\otimes L(\frac{21}{22},0)
		\oplus L(\frac{1}{2},0)\otimes L(\frac{21}{22},8)
		\\
		& \quad \oplus  L(\frac{1}2,\frac{1}2)\otimes L(\frac{21}{22},\frac{7}{2})
		\oplus L(\frac{1}2,\frac{1}2)\otimes L(\frac{21}{22},\frac{45}{2})
		\\
		& \quad  \oplus L(\frac{1}{2},\frac{1}{16})\otimes L(\frac{21}{22},\frac{31}{16})
		\oplus L(\frac{1}{2},\frac{1}{16})\otimes L(\frac{21}{22},\frac{175}{16}).
	\end{split}
	\]
	
	For simplicity, we use $[c, h]$ to denote $L(c, h)$ throughout the paper. Then we have the following result (Theorem 3.38 in \cite{LYY}).
	\begin{theorem}\label{3cmod}
		There are exactly five irreducible $U$-modules $U(2k), 0 \le k \le
		4$. In fact, $U(0)=U$ and as $L(1/2, 0)\otimes L(21/22,0)$-modules,
		\begin{align*}
			U(2) & \cong [0,\frac{13}{11}] \oplus [0,\frac{35}{11}] \oplus
			[\frac{1}{2},\frac{15}{22}] \oplus [\frac{1}{2},\frac{301}{22}]
			\oplus [\frac{1}{16},\frac{21}{176}] \oplus [\frac{1}{16},
			\frac{901}{176}],\\
			U(4) & \cong [0,\frac{6}{11}]  \oplus [0,\frac{50}{11}] \oplus
			[\frac{1}{2},\frac{1}{22}] \oplus [\frac{1}{2},\frac{155}{22}]
			\oplus [\frac{1}{16},\frac{85}{176}] \oplus [\frac{1}{16},
			\frac{261}{176}],\\
			U(6) & \cong  [0,\frac{1}{11}] \oplus [0,\frac{111}{11}] \oplus
			[\frac{1}{2},\frac{35}{22}] \oplus [\frac{1}{2},\frac{57}{22}]
			\oplus [\frac{1}{16}, \frac{5}{176}]
			\oplus [\frac{1}{16},\frac{533}{176}],\\
			U(8) &\cong [0,\frac{20}{11}] \oplus [0,\frac{196}{11}] \oplus
			[\frac{1}{2},\frac{7}{22}] \oplus [\frac{1}{2},\frac{117}{22}]
			\oplus [\frac{1}{16},\frac{133}{176}] \oplus
			[\frac{1}{16},\frac{1365}{176}].
		\end{align*}
	\end{theorem}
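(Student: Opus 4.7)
The plan is to exploit the coset realization $U \subset V_{\sqrt{2}E_8}$ together with the conformal embedding $L(1/2,0)\otimes L(21/22,0)\hookrightarrow U$, which is forced by the central-charge identity $1/2+21/22=16/11=c(\tilde{\omega})$. Because this embedding is conformal (the Virasoro elements agree), $U$ is a finite extension of $L(1/2,0)\otimes L(21/22,0)$, so every irreducible ordinary $U$-module must decompose as a finite sum of irreducible $L(1/2,0)\otimes L(21/22,0)$-modules, making the listed sums on the right at least plausible candidates.

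To produce the five modules explicitly, I would pair $U$ with its own commutant $M := U^c$ inside $V_{\sqrt{2}E_8}$. By mutual commutant / double commutant results for rational lattice VOAs, the pair $(M,U)$ acts on $V_{\sqrt{2}E_8}$ with a finite decomposition into tensor products of irreducible $M$- and $U$-modules. The $M$-isotypic components then provide the candidate irreducible $U$-modules, and comparing graded characters---the known characters of the Virasoro minimal-model blocks $L(1/2,h)$ and $L(21/22,h)$ on one side, the $E_8$ theta-function / $\eta$-quotient characters of $V_{\sqrt{2}E_8}$ on the other---pins down the decompositions displayed for $U(0), U(2), \dots, U(8)$. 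The indexing $2k$ tracks the conformal weight of the corresponding $M$-primary, which is why the modules come with only even labels.

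For completeness, the rationality and $C_2$-cofiniteness of $U$, inherited from $L(1/2,0)\otimes L(21/22,0)$ through the finite extension, guarantee that $U$ has only finitely many irreducibles, and the $V_{\sqrt{2}E_8}$-decomposition exposes exactly five $M$-sectors. Alternatively, one can count inequivalent irreducibles via the dimension of $A(U)$, or via the modular $S$-matrix of the extension.

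The main obstacle will be the explicit character matching. Knowing only that $U$ is a finite extension of $L(1/2,0)\otimes L(21/22,0)$ does not tell us \emph{which} minimal-model primaries glue together into a single $U(2k)$, nor with what multiplicities; in particular, it does not rule out repeated summands or sum collisions a priori. Resolving this requires a delicate $q$-expansion analysis---comparing both sides of each decomposition to sufficiently high order in $q$ to isolate exactly which of the many possible $L(1/2,0)\otimes L(21/22,0)$-primaries actually occur, and to verify each appears with multiplicity one. This combinatorial-analytic step is the computational heart of the proof, and in practice the selection rules come out cleanly because of the strong constraints imposed by the $E_8$ lattice structure.
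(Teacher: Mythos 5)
You should first know that the paper contains no proof of this statement: Theorem \ref{3cmod} is quoted verbatim from \cite{LYY} (Theorem 3.38 there), so there is no internal argument to compare yours against. That said, your outline is in the right spirit and close to how the result is actually obtained in \cite{LYY}: the modules $U(2k)$ do arise as multiplicity spaces in the decomposition of $V_{\sqrt{2}E_8}$ under the commutant pair --- this is precisely the decomposition (\ref{E8decom}) recorded in Section 2 --- and their branching under $L(1/2,0)\otimes L(21/22,0)$ is pinned down by explicit weight and character computations.

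The genuine gap is in the completeness claim. Exhibiting five inequivalent candidate modules inside $V_{\sqrt{2}E_8}$ proves existence, not exhaustion: an irreducible module of a commutant subalgebra need not occur in the decomposition of the ambient VOA, so ``the $V_{\sqrt{2}E_8}$-decomposition exposes exactly five $M$-sectors'' does not show there are \emph{only} five irreducible $U$-modules. Closing this requires first establishing that $U$ is rational --- and your claim that rationality is ``inherited from $L(1/2,0)\otimes L(21/22,0)$ through the finite extension'' is circular at this stage, since rationality of a VOA extension is itself a nontrivial theorem, proven in \cite{LYY} alongside the classification via the explicit lattice structure and the fusion rules of the Virasoro minimal models --- and then a counting argument (quantum or global dimensions, Zhu's algebra, or the ADE-type classification of extensions as in \cite{KO, DLin2}) to exclude further modules. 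Relatedly, you never argue that the multiplicity spaces are irreducible over $U$ or pairwise inequivalent as $U$-modules; the former needs a duality theorem for the commutant pair or a direct argument. The deferred character matching is real but routine work; exhaustion and irreducibility are where the mathematical content actually sits.
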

	
	Note that the lattice VOA $V_{\sqrt{2}E_8}$ has a decomposition as follows (equation (3.95) in \cite{LYY}):
	
	\begin{align}\label{E8decom}
		V_{\sqrt{2}E_8} \cong
		& \bigoplus_{ { 0\leq k_j\leq j+1}
			\atop{{ k_j \equiv 0\,\mathrm{mod}\, 2} \atop{ j=0,1,\dots, 8}}}
		L(c_1,h^1_{k_{0}+1,k_1+1})\otimes \cdots  L(c_8,h^8_{k_{7}+1,k_8+1})
		\otimes U(k_8),
	\end{align}
	where $c_m$ and $h_{r,s}^m$ are given by \eqref{eq:VIRC} and
	\eqref{eq:VIRH}, and $U(k_8)$ is the corresponding $U_{3C}$-module as in Theorem \ref{3cmod}. This decomposition will be crucial in Section 3 to deduce the unitarity of irreducible modules.

\section{Unitarity of the VOA $L(21/22, 0)\oplus L(21/22, 8)$ and its modules}

In this section, first we shall briefly recall the definitions of unitary VOA and unitary modules from \cite{DLin1}. Then we show that the VOA $L(21/22, 0)\oplus L(21/22, 8)$ is unitary and all its irreducible modules are unitary modules.
In the following we only consider the vertex operator algebra $(V, Y, \1, \omega)$ of \emph{CFT-type}, i.e. $V_n=0, n<0$ and $V_0=\mathbb{C}\1$.
\begin{definition}
	Let $(V, Y, \1, \omega)$ be a vertex operator algebra. An anti-linear automorphism $\phi$ of $V$ is
	an anti-linear isomorphism (as anti-linear map) $\phi:V\to V$ such
	that $\phi(\bf1)=\bf1, \phi(\omega)=\omega$ and
	$\phi(u_nv)=\phi(u)_n\phi(v)$ for any $ u, v\in V$ and $n\in
	\mathbb{Z}$.
\end{definition}

\begin{definition}
	Let $(V, Y, \1, \omega)$ be a vertex operator algebra and $\phi: V\to V$ be an anti-linear involution, i.e. an anti-linear automorphism of order $2$. The pair $(V, \phi)$ is called  a unitary vertex operator algebra if there exists a positive definite Hermitian form $(,): V\times V\to \mathbb{C}$
	which is $\C$-linear on the first vector and anti-$\C$-linear on the second vector such that the following invariant property holds:\\
	$$(Y(e^{zL(1)}(-z^{-2})^{L(0)}a, z^{-1})u, v)=(u, Y(\phi(a), z)v)$$
	for any $a, u, v\in V$, where $L(n)$ is defined by $Y(\omega, z)=\sum_{n\in \Z}L(n)z^{-n-2}$.
\end{definition}

Next we give the definition of a unitary module as in \cite{DLin1}.
\begin{definition}\label{unimod} Let $(V, Y, \1, \omega)$ be a vertex operator algebra and $\phi$ an anti-linear involution of $V$. An ordinary $V$-module $(M, Y_M)$ is called a unitary $V$-module if there exists a positive definite
	Hermitian form $(,)_M: M\times M\to \mathbb{C}$ which is $\C$-linear on the first vector and anti-$\C$-linear on the second vector such that the following invariant property holds:	$$(Y_M(e^{zL(1)}(-z^{-2})^{L(0)}a, z^{-1})w_1, w_2)_M=(w_1, Y_M(\phi(a),z)w_2)_M$$  for $a \in V $ and $w_1, w_2\in M$.
\end{definition}

We shall frequently use the following result (Corollary 2.8 in \cite{DLin1}):
\begin{proposition}\label{coset}
	Let $(V,\phi)$ be a unitary VOA and $(U, Y, \1, \w')$ be a vertex operator subalgebra of $V$ such that  $\phi(\w')=\w'$. Then the coset  $(U^c, \phi|_{U^c})$ VOA is also a unitary VOA.
\end{proposition}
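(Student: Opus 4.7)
The plan is to verify that the restriction of the anti-linear involution $\phi$ and the Hermitian form to $U^c$ satisfy the three requirements for $(U^c, \phi|_{U^c})$ to be a unitary VOA: that $\phi$ restricts to an anti-linear involution of $(U^c, Y, \1, \omega'')$, where $\omega'' = \omega - \omega'$; that the restricted Hermitian form is positive definite; and that the invariance identity holds for the restricted data, now with respect to $\omega''$ rather than $\omega$.

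For the first point, I would start from the observation that the hypotheses $\phi(\omega) = \omega$ and $\phi(\omega') = \omega'$ force $\phi(\omega'') = \omega''$. Since $\phi$ is a VOA automorphism, it commutes with $L'(-1) = \omega'_0$, and Li's characterization $U^c = \{u \in V : L'(-1) u = 0\}$ recalled in the excerpt then guarantees $\phi(U^c) \subseteq U^c$. The restricted map inherits anti-linearity, order two, and the vertex algebra automorphism property from $\phi$. The second point is then immediate, since the restriction of a positive definite Hermitian form to a subspace remains positive definite.

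The substance of the proof lies in the invariance identity. The key observation is numerical: for any $a \in U^c$ and any $n \geq -1$, the coset defining relation applied to $\omega' \in U$ gives $L'(n) a = \omega'_{n+1} a = 0$. In particular $L(0) a = L''(0) a$ and $L(1) a = L''(1) a$ on $U^c$, so
\[
e^{z L(1)} (-z^{-2})^{L(0)} a \;=\; e^{z L''(1)} (-z^{-2})^{L''(0)} a
\]
as formal series valued in $U^c$. Plugging this into the invariance identity that already holds on $V$, and noting that $\phi(a) \in U^c$ because $\phi$ stabilizes $U^c$, the identity specializes to precisely the invariance property required of $(U^c, \phi|_{U^c})$ with conformal vector $\omega''$.

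The main (mild) obstacle is the bookkeeping in the third step, namely the observation that $L(n)$ may be replaced by $L''(n)$ on coset vectors for the two indices $n = 0, 1$ appearing in the invariance formula. Beyond that, the argument is a straightforward restriction of the existing structure on $V$, drawing only on $\phi$-invariance of $\omega'$ and the standard facts about the coset construction already cited in the excerpt.
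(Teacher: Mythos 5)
Your argument is correct. The paper itself gives no proof of this proposition --- it is quoted as Corollary 2.8 of \cite{DLin1} --- and your restriction argument (using $\phi(\w'')=\w''$, the characterization $U^c=\{u : L'(-1)u=0\}$ to get $\phi$-stability, and $L'(0)a=L'(1)a=0$ for $a\in U^c$ to replace $L(n)$ by $L''(n)$ in the invariance identity) is essentially the standard proof given there.
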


We now turn to prove the unitarity of $L(21/22, 0)\oplus L(21/22, 8)$.
\begin{theorem}\label{c9unitary}
	The VOA $L(21/22, 0)\oplus L(21/22, 8)$ is unitary.
\end{theorem}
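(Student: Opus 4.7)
The plan is to exhibit $L(21/22, 0) \oplus L(21/22, 8)$ as an iterated commutant inside the unitary lattice VOA $V_{\sqrt{2}E_8}$, and to propagate unitarity through two applications of Proposition \ref{coset}. By \cite{DLin1}, $V_{\sqrt{2}E_8}$ is unitary; I would take $\phi$ to be the standard anti-linear involution, acting as complex conjugation on the Heisenberg modes and sending $e^{\sqrt{2}\alpha}\mapsto e^{-\sqrt{2}\alpha}$ (with the usual choice of $2$-cocycle), so that $\phi(\w)=\w$. Reading \eqref{sw} term by term, each $\alpha(-1)^2\cdot\1$ is $\phi$-fixed and each pair $e^{\sqrt{2}\alpha}+e^{-\sqrt{2}\alpha}$ is manifestly $\phi$-symmetric, giving $\phi(s)=s$. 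Since $s$ is itself the Virasoro element of the sub-VOA $\langle s\rangle\subset V_{\sqrt{2}E_8}$, Proposition \ref{coset} yields that $U_{3C}=\langle s\rangle^c$ is a unitary VOA with involution $\phi|_{U_{3C}}$.

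Next, the decomposition of $U_{3C}$ recalled just before Theorem \ref{3cmod} exhibits its Virasoro element $\tilde\w = \w - s$ as a sum $\tilde\w = \w_1+\w_2$, with $\w_1$ an Ising conformal vector of central charge $1/2$ and $\w_2$ a conformal vector of central charge $21/22$. Writing $L'(n)$ for the modes of $\w_1$ as in Section 2, a direct computation in each summand $L(1/2,h_1)\otimes L(21/22,h_2)$ shows that $(\w_1)_n v=0$ for all $n\geq 0$ forces $h_1=0$ (from $(\w_1)_1=L'(0)$) and forces the Ising tensor factor of $v$ to be a multiple of $\1$ (from $(\w_1)_0=L'(-1)$); hence the commutant of $\langle\w_1\rangle\cong L(1/2,0)$ inside $U_{3C}$ is precisely $L(21/22,0)\oplus L(21/22,8)$. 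Provided $\phi(\w_1)=\w_1$, a second application of Proposition \ref{coset} to $(U_{3C},\phi|_{U_{3C}})$ with sub-VOA $\langle\w_1\rangle$ completes the proof.

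The crux is the $\phi$-invariance of the Ising element $\w_1$. I would verify this from the explicit construction of $\w_1$ in \cite{LYY}, which realizes it as a symmetric sum of lattice data of the same flavor as those appearing in $s$; the check then reduces to a direct analogue of the verification $\phi(s)=s$ above. Should that description prove less than transparent, a cleaner fallback is the following rigidity remark: $\phi|_{U_{3C}}$ preserves the class of decompositions $\tilde\w=\w_1'+\w_2'$ into mutually orthogonal conformal vectors of central charges $1/2$ and $21/22$, so if such a decomposition is unique inside $U_{3C}$ (as should be readable off the coset structure exhibited in Section 2), then $\phi(\w_1)=\w_1$ is automatic. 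This uniqueness is the main, but essentially formal, obstacle that I anticipate.
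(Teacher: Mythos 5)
Your proposal is correct and follows essentially the same route as the paper: unitarity of $V_{\sqrt{2}E_8}$ with the standard anti-linear involution $\phi$, the manifest $\phi$-invariance of $s$, and two applications of Proposition \ref{coset}. The one step you flag as the crux, $\phi(\w_1)=\w_1$, is settled exactly as you anticipate: Lemma 3.32 of \cite{LYY} gives $\w_1=\frac{11}{32}\tilde{\w}+\frac{1}{32}\sum_{\alpha}(e^{\sqrt{2}\alpha}+e^{-\sqrt{2}\alpha})$, which is a symmetric sum of lattice data and hence visibly $\phi$-fixed, so your fallback uniqueness argument is not needed.
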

\begin{proof}
	Let $L$ be a positive definite even lattice. Then the lattice VOA $V_L$ is unitary and the anti-linear involution $\phi: V_L \to V_L$ is given by:
	$$\alpha_1(-n_1)\cdots\alpha_k(-n_k)\otimes e^{\alpha}\mapsto (-1)^k\alpha_1(-n_1)\cdots\alpha_k(-n_k)\otimes
	e^{-\alpha},$$
	see \cite{DLin1}.
	In particular, the lattice VOA $V_{\sqrt{2}E_8}$ is unitary, and the conformal vector defined in equation (\ref{sw}) satisfies $\phi(s)=s$.  Then by (\ref{3c}) and Proposition \ref{coset}, the associated coset VOA $U_{3C}$ is unitary with the anti-involution $\phi|_{U_{3C}}$.
	
	From Lemma 3.32 in \cite{LYY}, the conformal vector $\omega_1$ of the subalgebra $L(1/2, 0)$ in $U_{3C}$ is fixed by $\phi$. Explicitly, we have
	\begin{equation*}
		\omega_1=\frac{11}{32}\tilde{\omega}+\frac{1}{32}\sum_{\alpha \in \alpha_8+A_8, <\alpha, \alpha>=2}(e^{\sqrt{2}\alpha}+e^{-\sqrt{2}\alpha}),
	\end{equation*}
	where $\alpha_8$ is given in (\ref{eq:A_0}) and $\tilde{\omega}$ is given in (\ref{sw}). Since $L(21/22, 0)\oplus L(21/22, 8)$ can be realized as the coset subalgebra of $U_{3C}$ associated to $\w_1$, Proposition \ref{coset} implies that it is unitary.
\end{proof}

Next we show that all irreducible  $L(21/22, 0)\oplus L(21/22, 8)$-modules are unitary. For simplicity, we write $$\mathcal{M}=L(21/22, 0)\oplus L(21/22, 8).$$

Uniqueness of the VOA $\mathcal M$ is given in Theorem 4.11 of \cite{DLin2} (see also Theorem 4.4 in \cite{DZh}). All irreducible $\mathcal{M}$-modules are given in \cite{DLin2} using the modular invariant of type $(A_{10}, E_6)$. Comparing those module structures with the decomposition of the $U_{3C}$-modules given in Theorem \ref{3cmod}, we notice that all irreducible $\mathcal{M}$-modules arise from the decomposition of the $U_{3C}$-modules. For each irreducible $U_{3C}$-module $U(2k)$, we rewrite the decomposition in Theorem \ref{3cmod} as:
\begin{equation}\label{U2k}
	U(2k)\cong L(\frac{1}{2}, 0)\otimes \mathcal{M}_{k0}\oplus L(\frac{1}{2}, \frac{1}{2})\otimes \mathcal{M}_{k1}\oplus L(\frac{1}{2}, \frac{1}{16})\otimes \mathcal{M}_{k2},
\end{equation}
where $\mathcal{M}_{kl}, k\in\{0, 1, 2, 3, 4\}, l\in\{0, 1, 2\}$ are precisely all the irreducible $\mathcal{M}$-module and $\mathcal{M}\cong \mathcal{M}_{00}$.

Before we prove our result, we need the following lemma:
\begin{lemma}\label{cosetmod}
	Let $V, U,$ and $U^c$ be VOAs satisfying the assumptions of Proposition \ref{coset}. In addition, we assume both $U$ and $U^c$ are rational. Let $(N, Y_N)$ be an irreducible unitary $V$-module. Suppose that as a $U\otimes U^c$-module, $$N\cong\oplus_{i=0}^{m}U_i\otimes M_i,$$
	where $U_i, M_i$ are irreducible $U$-modules and $U^c$-modules respectively. Then each $M_i$ is a unitary $U^c$-module.
\end{lemma}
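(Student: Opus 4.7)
The plan is to construct a positive definite, $U^c$-invariant Hermitian form on each $M_j$ directly from $(,)_N$, by restricting the form to a single irreducible summand $U_j\otimes M_j\subset N$ and then ``contracting'' along a fixed vector in the first tensor factor.

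First I would record the immediate consequences of the hypotheses. Rationality of $U$ and $U^c$ guarantees that $N$ is completely reducible as a $U\otimes U^c$-module and that each $U_j\otimes M_j$ is irreducible, so in particular it can be viewed as a subspace of $N$ via the given decomposition. The hypothesis $\phi(\omega')=\omega'$ implies that $\phi$ commutes with every mode $L'(n)$; hence $\phi$ stabilizes $U^c=\{u\in V\mid L'(-1)u=0\}$ and restricts to an anti-linear involution of $U^c$. Moreover, the coset condition forces $L'(0)a=L'(1)a=0$ for every $a\in U^c$, so that $L(n)a=L''(n)a$ for $n=0,1$, where $L''$ denotes the Virasoro modes attached to $\omega''=\omega-\omega'$. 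This identification is what allows the $V$-invariance formula to descend correctly to a $U^c$-invariance formula on $M_j$.

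Next I would fix a nonzero $u_0\in U_j$ and define
\[
(m,m')_{M_j}\;:=\;(u_0\otimes m,\;u_0\otimes m')_N,\qquad m,m'\in M_j.
\]
Sesquilinearity and Hermitian symmetry are inherited from $(,)_N$, while positive definiteness follows from injectivity of $m\mapsto u_0\otimes m$ together with positive definiteness of $(,)_N$ on the subspace $U_j\otimes M_j$. For invariance, observe that for $a\in U^c$ the action on the summand splits as $Y_N(a,z)(u_0\otimes m)=u_0\otimes Y_{M_j}(a,z)m$, and analogously for $\phi(a)\in U^c$. Substituting $u_0\otimes m$ and $u_0\otimes m'$ into the $V$-invariance of $(,)_N$ and using $L(n)a=L''(n)a$ for $n=0,1$, the identity reduces to
\[
(Y_{M_j}(e^{zL''(1)}(-z^{-2})^{L''(0)}a,\,z^{-1})m,\,m')_{M_j}\;=\;(m,\,Y_{M_j}(\phi(a),\,z)m')_{M_j},
\]
which is precisely the invariance condition of Definition \ref{unimod} for the $U^c$-module $M_j$ with involution $\phi|_{U^c}$.

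I do not anticipate a serious obstacle here; the argument is essentially a restriction followed by a contraction, with some bookkeeping. The one step that requires care is the Virasoro matching: the $L(0)$ and $L(1)$ appearing in the $V$-invariance must be correctly identified with the $U^c$-Virasoro modes $L''(0)$ and $L''(1)$ on elements of $U^c$, so that the resulting formula genuinely is the expected $U^c$-module invariance rather than something twisted by $\omega'$.
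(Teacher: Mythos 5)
Your proof is correct and follows essentially the same route as the paper's: restrict the invariant Hermitian form on $N$ to a copy of $M_j$ inside the summand $U_j\otimes M_j$, and use $L'(0)a=L'(1)a=0$ for $a\in U^c$ to convert the $V$-invariance identity into the $U^c$-invariance identity with $L''(0)$, $L''(1)$. Your device of fixing $u_0\in U_j$ and realizing $M_j$ as the subspace $u_0\otimes M_j\subseteq N$ is in fact slightly more explicit than the paper's phrase ``the restrictions of $Y_N$ and $(\, ,\,)_N$ to $M_i$,'' but it is the same argument.
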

\begin{proof}
	From Proposition \ref{coset}, we know both $(V,\phi)$ and the coset $(U^c, \phi|_{U^c})$ are unitary VOAs. Since 
	$(N, Y_N)$ is a unitary $V$-module, there exists a positive definite
	Hermitian form $(,)_N: N\times N\to \mathbb{C}$  satisfying the following invariant property:
	\begin{equation}\label{invariant}
		(Y_N(e^{zL(1)}(-z^{-2})^{L(0)}a, z^{-1})w_1, w_2)_N=(w_1, Y_N(\phi(a),z)w_2)_N
	\end{equation}
	for $a\in V, w_1, w_2\in N$.
	
	Restricting this identity to $a\in U^c$ and $w_1, w_2\in M_i$, and using the fact that $L'(1)a=L'(0)a=0$ for any $a\in U^c$, we obtain
	\begin{equation}\label{invariant2}
		(Y_{M_i}(e^{zL^{''}(1)}(-z^{-2})^{L^{''}(0)}a, z^{-1})w_1, w_2)_{M_i}=(w_1, Y_{M_i}(\phi(a),z)w_2)_{M_{i}},
	\end{equation}
	where  $Y_{M_i}$ and $(\, , \,)_{M_i}$ are the restrictions of $Y_N$ and $(\, ,\,)_{N}$ to $M_i$ respectively. Hence $M_i$ is a unitary $U^c$-module, which completes the proof.
\end{proof}

\begin{theorem}
	Any irreducible $L(21/22, 0)\oplus L(21/22, 8)$-module is unitary.
\end{theorem}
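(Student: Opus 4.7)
The plan is to apply Lemma \ref{cosetmod} twice in succession, propagating unitarity from the ambient lattice VOA down to $\mathcal{M}$ via two intermediate layers: first the $3C$-algebra, then the Ising model $L(1/2,0)$.

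\emph{Step 1: each irreducible $U_{3C}$-module $U(2k)$ is unitary.} I apply Lemma \ref{cosetmod} with $V=V_{\sqrt{2}E_8}$, with $U=L(c_1,0)\otimes\cdots\otimes L(c_8,0)$ the tensor product of Virasoro minimal-model subalgebras realized inside $V_{\sqrt{2}E_8}$ as in \eqref{E8decom}, and with $U^c=U_{3C}$. The ambient VOA $V_{\sqrt{2}E_8}$ is simple and unitary (as recalled in the proof of Theorem \ref{c9unitary}), hence is itself an irreducible unitary module over itself. Each $L(c_j,0)$ is rational, so their tensor product is rational, and $U_{3C}$ is rational by \cite{LYY}. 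The anti-linear involution $\phi$ fixes each of the Virasoro conformal vectors used in \eqref{E8decom} (they are, by construction in \cite{DLMN,LYY}, polynomials in $\phi$-symmetric combinations of Heisenberg generators and of $e^{\sqrt{2}\alpha}+e^{-\sqrt{2}\alpha}$, exactly as in the formula \eqref{sw} for $s$). The decomposition \eqref{E8decom} realizes $V_{\sqrt{2}E_8}$ as a direct sum of irreducibles for $U\otimes U_{3C}$, and as $k_8$ ranges over $\{0,2,4,6,8\}$ every one of the five $U(2k)$ appears. Lemma \ref{cosetmod} thus gives unitarity of each $U(2k)$ as a $U_{3C}$-module.

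\emph{Step 2: each irreducible $\mathcal{M}$-module $\mathcal{M}_{kl}$ is unitary.} I apply Lemma \ref{cosetmod} a second time, now with $V=U_{3C}$, $U=L(1/2,0)$ and $U^c=\mathcal{M}$. The hypotheses are in place: $U_{3C}$ is unitary by Theorem \ref{c9unitary}, the conformal vector $\omega_1$ of $L(1/2,0)$ inside $U_{3C}$ is $\phi$-fixed (as noted in the proof of Theorem \ref{c9unitary}), $L(1/2,0)$ is rational, and $\mathcal{M}$ is rational by the uniqueness/classification results cited from \cite{DLin2,DZh}. For each $k\in\{0,1,2,3,4\}$, the $U_{3C}$-module $N=U(2k)$ is irreducible unitary by Step 1, and \eqref{U2k} provides the required $L(1/2,0)\otimes\mathcal{M}$-decomposition. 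Lemma \ref{cosetmod} then yields unitarity of $\mathcal{M}_{k0}$, $\mathcal{M}_{k1}$ and $\mathcal{M}_{k2}$. As $k$ varies over $\{0,1,2,3,4\}$, this exhausts the full list of $15$ irreducible $\mathcal{M}$-modules.

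The main obstacle is not the logical structure, which is a clean double application of Lemma \ref{cosetmod}, but rather the verification that the Virasoro conformal vectors of the nested coset tower in \eqref{E8decom} are all preserved by the lattice anti-involution $\phi$. For $s$ (and hence $\tilde\omega$) and for $\omega_1$ this is already either explicit in \eqref{sw} or cited from \cite{LYY}; for the remaining intermediate $c_j$-Virasoro vectors appearing in the iterated commutant construction of \cite{DLMN,LYY}, the same parity argument applies (Heisenberg monomials of even degree and symmetric group-algebra combinations $e^{\sqrt{2}\alpha}+e^{-\sqrt{2}\alpha}$ are $\phi$-fixed), so the verification is routine but should be flagged explicitly.
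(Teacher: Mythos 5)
Your proof is correct and follows essentially the same route as the paper: unitarity of each $U(2k)$ is deduced from the decomposition \eqref{E8decom} together with Lemma \ref{cosetmod}, and then Lemma \ref{cosetmod} is applied a second time to \eqref{U2k}. The only remark is that the verification you flag as the main obstacle is unnecessary: Lemma \ref{cosetmod} (via Proposition \ref{coset}) only requires the \emph{total} conformal vector of the tensor-product subalgebra, namely $s$, to be $\phi$-fixed, which is immediate from \eqref{sw}, so the individual intermediate $c_j$-Virasoro vectors need not be examined.
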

\begin{proof}
	From above discussions, it suffices to show that for any $i\in\{0, 1, 2\}, k\in\{0, 1, 2, 3, 4\}$, $\mathcal{M}_{ik}$ is a unitary module. By the proof of Theorem \ref{c9unitary} and the decomposition given in equation
	(\ref{E8decom}), we have that each irreducible $U_{3C}$-module $U(2k)$ is a unitary module, for $k\in\{0, 1, 2, 3, 4\}$. Applying  Lemma \ref{cosetmod} to the decomposition (\ref{U2k}), we conclude that each $\mathcal{M}_{i,k}$ is a unitary $\mathcal{M}$-module, and this completes the proof.
\end{proof}

\section{Fusion rules}
In this section, instead of computing the fusion rules of irreducible $L(21/22, 0)\oplus L(21/22, 8)$-modules directly, we establish a general result on fusion rules for commutant subalgebras under suitable assumptions by applying the categorical framework developed in \cite{DRX2}. The fusion rules of irreducible $L(21/22, 0)\oplus L(21/22, 8)$-modules are then an immediate consequence of this general result.

First we recall several notions and results from \cite{DRX2}.
Throughout this section, we assume that $U$ is a vertex operator subalgebra of the vertex operator algebra $V$ such that

(1) $U=U^{cc},$

(2) $U, U^c$, and $V$ are rational, $C_2$-cofinite and of CFT types, and satisfy $L(1)V_1=0$,

(3) The conformal weight of any irreducible $U$-module, $U^c$-module, or $V$-module is positive, except for the vacuum modules $U$, $U^c$, and $V.$

Let $M^i$, for $i \in I=\{1,...,p\}$, denote the inequivalent irreducible $V$-modules with $M^1=V$.
Let $W^{\alpha}$, for $\alpha\in J=\{1,...,q\}$, denote the inequivalent irreducible $U$-modules with $W^1=U,$ and let $N^{\phi}$, for $\phi\in K=\{1,...,s\}$, denote the inequivalent irreducible $U^c$-modules
with $N^1=U^c.$ Denote by $\CC_1=\CC_U$  the category of $U$-modules, by $\CC_2=\CC_{U^c}$ the category of $U^c$-modules, and by
$\CC=\CC_V=(\CC_1\otimes \CC_2)_V^0$ the category of $V$-modules.  Under the above assumptions, $\CC_1$ and $\CC_2$ are pseudo unitary modular tensor categories \cite{ENO}. We use $\Or(\CC_1)=\{W^\alpha|\alpha\in J\}$  to denote the isomorphism classes of simple objects of $\CC_1$ and $U=W^1=1_{\CC_1}.$  Then $V\in \CC_1\otimes \CC_2$ is a regular commutative algebra \cite{HKL}. Moreover, the category of local $V$-modules $\CC=(\CC_1\otimes \CC_2)_V^0$ is also a modular tensor category \cite{HKL}. For each $i\in I$, the object $M^i$ decomposes in $\CC_1\otimes \CC_2$ as follows:
\begin{equation}\label{maindecom}
	M^i\cong\bigoplus_{\alpha\in J_i}W^\alpha\otimes M^{(i,\alpha)},
\end{equation}
where $J_i$ is a subset of $J.$   For $\beta\in J$, set
$$a_{\beta\otimes 1}=V\boxtimes_{\CC_1\otimes \CC_2}(W^\beta\otimes 1_{\CC_2}).$$
and recall the Kac-Wakimoto set (see \cite{DRX2})
$$\KW=\{W^\beta\in \Or(\CC_1)|a_{\beta\otimes 1}\in\CC\}.$$
Note that $1_{\CC_1}\in \KW.$ So the Kac-Wakimoto set is not empty. Let ${\DD}=\{W^\beta|\beta\in J_1\}.$  The \emph{M\"uger centralizer} $C_{\CC_1}(\DD)$ is the  subcategory of $\CC_1$ consisting of the objects $Y$ in $\CC_1$ such that $c^1_{Y,X}\circ  c^1_{X,Y} = \id_{X\boxtimes Y}$  for all $X$ in $\DD$ where $c^1_{Y,X}:Y\boxtimes X\to X\boxtimes Y$ is the braiding isomorphism.
We shall use the following results from \cite{DRX2} (cf. Proposition 4.6 and 4.9 and Theorem 6.5 therein, adapted to our setting):
\begin{proposition}\label{muger} With the above assumptions and notations, $a_{\alpha\otimes 1}\in \CC$  if and only if $W^\alpha\in C_{\CC_1}(\DD).$
\end{proposition}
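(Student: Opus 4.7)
The plan is to reduce the locality of the induced object $a_{\alpha\otimes 1}$ to a monodromy condition between $W^\alpha$ and the components of $V$ living in $\mathcal{C}_1$. Recall the standard characterization of local modules for a commutative algebra in a braided tensor category: for any object $X$ in $\mathcal{C}_1\otimes\mathcal{C}_2$, the induced module $V\boxtimes X$ is a local $V$-module (i.e.\ lies in $\mathcal{C}$) if and only if the full monodromy between $V$ and $X$ is trivial, that is
\[
c_{V,X}\circ c_{X,V}=\mathrm{id}_{X\boxtimes V}.
\]
I would take this as the starting point. Since $V$ itself is a local $V$-module, $V$ belongs to $\mathcal{C}$, so by \eqref{maindecom} applied to $i=1$ we have
\[
V\cong \bigoplus_{\beta\in J_1}W^\beta\otimes M^{(1,\beta)}
\]
in $\mathcal{C}_1\otimes\mathcal{C}_2$, where the simple objects $W^\beta$ ($\beta\in J_1$) are exactly those that appear in $\mathcal{D}$.

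Next, I would specialize $X=W^\alpha\otimes 1_{\mathcal{C}_2}$ and use that the braiding of $\mathcal{C}_1\otimes\mathcal{C}_2$ is (up to the symmetry swap) the tensor product of the braidings $c^1,c^2$ of $\mathcal{C}_1$ and $\mathcal{C}_2$. Because $1_{\mathcal{C}_2}$ is the unit, $c^2_{1,M^{(1,\beta)}}$ and $c^2_{M^{(1,\beta)},1}$ are identities, so the full monodromy between $W^\beta\otimes M^{(1,\beta)}$ and $W^\alpha\otimes 1_{\mathcal{C}_2}$ reduces on the $\mathcal{C}_2$-factor to the identity, while on the $\mathcal{C}_1$-factor it is precisely $c^1_{W^\beta,W^\alpha}\circ c^1_{W^\alpha,W^\beta}$. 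Therefore the global monodromy condition is equivalent to
\[
c^1_{W^\beta,W^\alpha}\circ c^1_{W^\alpha,W^\beta}=\mathrm{id}_{W^\alpha\boxtimes W^\beta}\qquad \text{for every }\beta\in J_1,
\]
which is, by definition, the statement that $W^\alpha\in C_{\mathcal{C}_1}(\mathcal{D})$.

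Assembling the two equivalences yields $a_{\alpha\otimes 1}\in\mathcal{C}$ if and only if $W^\alpha\in C_{\mathcal{C}_1}(\mathcal{D})$. The only non-routine point that requires care is verifying that locality of an induced module against the whole algebra $V$ can be tested componentwise against the simple summands $W^\beta\otimes M^{(1,\beta)}$; this is a standard consequence of the bilinearity of the braiding and the fact that the monodromy morphism is a direct sum of its components under the decomposition of $V$. Once this componentwise reduction is in place, the rest is an immediate unpacking of the braiding on $\mathcal{C}_1\otimes\mathcal{C}_2$. I would also note that the non-emptiness of $\mathrm{KW}$ is recovered as a sanity check, since $1_{\mathcal{C}_1}$ braids trivially with everything and so automatically lies in $C_{\mathcal{C}_1}(\mathcal{D})$.
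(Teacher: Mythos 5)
The paper does not actually prove this proposition; it is quoted from \cite{DRX2} (Propositions 4.6 and 4.9 there), so there is no internal argument to compare against. Your proof is correct and is essentially the standard derivation of that cited result. The two ingredients you rely on are both sound: (i) for a commutative (\'etale) algebra $V$ in a braided fusion category, the induced module $V\boxtimes_{\CC_1\otimes\CC_2}X$ is local precisely when the monodromy $c_{V,X}\circ c_{X,V}$ is trivial --- this follows from the unit axiom together with commutativity of the multiplication, and is the content of, e.g., Kirillov--Ostrik/DMNO-type lemmas on free modules; and (ii) by naturality and additivity of the braiding, the monodromy with $V=\bigoplus_{\beta\in J_1}W^\beta\otimes M^{(1,\beta)}$ splits into the summandwise monodromies, each of which factors over the Deligne product as $\bigl(c^1_{W^\beta,W^\alpha}\circ c^1_{W^\alpha,W^\beta}\bigr)\otimes\id_{M^{(1,\beta)}}$ because the second tensor factor of $a_{\alpha\otimes 1}$ is the unit. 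The only micro-step worth making explicit is that $\bigl(c^1_{W^\beta,W^\alpha}\circ c^1_{W^\alpha,W^\beta}\bigr)\otimes\id_{M^{(1,\beta)}}=\id$ forces $c^1_{W^\beta,W^\alpha}\circ c^1_{W^\alpha,W^\beta}=\id$, which holds since $M^{(1,\beta)}\neq 0$ and $-\otimes\id_{M^{(1,\beta)}}$ is faithful; with that noted, the equivalence with membership in $C_{\CC_1}(\DD)$ is exactly the definition of the M\"uger centralizer used in the paper.
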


\begin{proposition}\label{kacW}  If $\KW=\{1_{\CC_1}\}$, then the objects $M^{(i,\alpha)} (i\in I, \alpha\in J)$ form a complete set of inequivalent simple objects in $\CC_2.$
\end{proposition}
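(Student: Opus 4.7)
The plan is to extract the categorical content of the hypothesis $\KW=\{1_{\CC_1}\}$ via Proposition \ref{muger}, then use the induction functor $F=V\boxtimes_{\CC_1\otimes\CC_2}(-):\CC_1\otimes\CC_2\to \CC$ and Frobenius reciprocity to show that each $M^{(i,\alpha)}$ is simple, that distinct pairs $(i,\alpha)$ give inequivalent modules, and finally that every simple object of $\CC_2$ is exhausted.

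By Proposition \ref{muger}, the hypothesis $\KW=\{1_{\CC_1}\}$ is equivalent to $C_{\CC_1}(\DD)=\{1_{\CC_1}\}$. Because $\CC_1$ is modular, the double-centralizer theorem together with the identity $\dim(\DD)\cdot\dim(C_{\CC_1}(\DD))=\dim(\CC_1)$ forces $\DD=\CC_1$. In particular every simple $W^\alpha\in\Or(\CC_1)$ already occurs in the $\CC_1\otimes\CC_2$-decomposition of the vacuum $V=M^1$, so no ``missing'' simple on the $\CC_1$ side obstructs the counting later.

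Next, Frobenius reciprocity for $F$ gives
$$\Hom_{\CC}(F(W^\alpha\otimes N^\phi),M^i)\;\cong\;\Hom_{\CC_1\otimes\CC_2}(W^\alpha\otimes N^\phi,M^i),$$
and via \eqref{maindecom} the right-hand side computes the multiplicity of $N^\phi$ in $M^{(i,\alpha)}$. Simplicity of the local $V$-module $M^i$, together with standard algebra-module theory in modular tensor categories (cf. \cite{KO,HKL}), then forces each nonzero $M^{(i,\alpha)}$ to be simple in $\CC_2$, and forces the assignment $(i,\alpha)\mapsto [M^{(i,\alpha)}]$ to be injective on isomorphism classes: an isomorphism $M^{(i,\alpha)}\cong M^{(j,\beta)}\cong N^\phi$ would make $M^i$ a summand of $F(W^\alpha\otimes N^\phi)$ and $M^j$ a summand of $F(W^\beta\otimes N^\phi)$, and comparing back with \eqref{maindecom} then pins down $(i,\alpha)=(j,\beta)$.

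For completeness I would combine the Kirillov--Ostrik identity $\dim\CC_1\cdot\dim\CC_2=(\dim V)^2\cdot\dim\CC$ with the row identities $\sum_i(\dim M^i)^2=\dim\CC$ and $\dim M^i=\sum_{\alpha\in J_i}\dim W^\alpha\cdot\dim M^{(i,\alpha)}$, and the global-dimension formulas for $\CC_1$ and $\CC_2$. Using $\DD=\CC_1$ to align the column sums over $\alpha$ cleanly, a direct reorganization yields $\sum_{(i,\alpha)}(\dim M^{(i,\alpha)})^2=\dim\CC_2$; together with the injectivity above, this forces $\{M^{(i,\alpha)}\}$ to exhaust $\Or(\CC_2)$. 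The main obstacle I expect is precisely this dimension accounting: translating the centralizer equality $C_{\CC_1}(\DD)=\{1_{\CC_1}\}$ into a clean identity among the multiplicities $\dim\Hom(W^\alpha\otimes N^\phi,M^i)$, without double-counting across the triple index $(i,\alpha,\phi)$, is the heart of the argument, and a more conceptual route would be to identify $\CC_2$ categorically as a ``mirror'' to $\CC_1$ inside $\CC$, an identification that would need to be made rigorous within the \cite{DRX2} framework.
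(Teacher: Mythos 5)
The paper itself offers no proof of this proposition: it is imported from \cite{DRX2} (Propositions 4.6, 4.9 and Theorem 6.5 there), so there is no in-paper argument to compare yours against. Judged on its own, your proposal correctly identifies the standard ingredients --- Proposition \ref{muger}, Frobenius reciprocity for the induction functor, and a Kirillov--Ostrik style dimension count --- but each of the three places where the real work happens is either left open or does not work as stated, so this is an outline of a strategy rather than a proof.

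Concretely: (1) From $C_{\CC_1}(\DD)=\{1_{\CC_1}\}$ and modularity of $\CC_1$, M\"uger's double centralizer theorem yields only that $\DD$ \emph{generates} $\CC_1$ as a fusion subcategory; to conclude $\DD=\CC_1$ (i.e.\ $J_1=J$) you must also know that the vacuum support $\{W^\alpha\mid\alpha\in J_1\}$ is closed under $\boxtimes$ and duals, a nontrivial structural fact from the mirror-extension machinery that you do not establish. (2) Simplicity of each $M^{(i,\alpha)}$ and injectivity of $(i,\alpha)\mapsto[M^{(i,\alpha)}]$ do not follow from ``simplicity of $M^i$ plus standard algebra-module theory,'' and your injectivity argument fails as stated: $F(W^\alpha\otimes N^\phi)$ is in general neither simple nor local and can have many inequivalent simple local summands, so knowing that $M^i$ and $M^j$ both occur in such induced objects pins down nothing. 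The working argument computes Hom spaces between induced objects via Frobenius reciprocity, e.g.\ $\dim\Hom\bigl(V\boxtimes(W^1\otimes N^\phi),\,V\boxtimes(W^1\otimes N^\psi)\bigr)=N^{N^\phi}_{N^1,N^\psi}=\delta_{\phi\psi}$, which shows the $V\boxtimes(U\otimes N^\phi)$ are simple and pairwise inequivalent; nothing of that sort appears in your sketch. (3) The completeness identity $\sum_{(i,\alpha)}(\dim M^{(i,\alpha)})^2=\dim\CC_2$ is precisely where the hypothesis $\KW=\{1_{\CC_1}\}$ must enter beyond the reformulation in (1), and you explicitly defer it (``a direct reorganization yields''), yourself calling it the main obstacle. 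To make this rigorous you would need to carry out that counting in the style of Kirillov--Ostrik or $\alpha$-induction, or simply cite \cite{DRX2} as the paper does.
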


\begin{theorem}\label{cvalue}
	Let $d_{\alpha}$ be the categorical dimension of $W^{\alpha}$, then
	\begin{equation*}
		\sum_{\beta\in J_1}d_{\beta}^2\leq 	\sum_{\gamma\in J_i}d_{\gamma}^2
	\end{equation*}
	for any $i\in I$.
\end{theorem}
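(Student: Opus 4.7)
The plan is to rephrase the inequality categorically and prove it via a dimensional computation in $\CC_1\otimes\CC_2$. By the Huang--Kirillov--Lepowsky correspondence, $V$ is a connected commutative \'etale algebra in $\CC_1\otimes\CC_2$ and $\CC$ is the MTC of its local modules; let $G\colon\CC\to\CC_1\otimes\CC_2$ denote the forgetful functor.

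First, I realize $\sum_{\alpha\in J_i} d_\alpha^2$ as the categorical dimension of a canonical subobject. Connectedness of $V$ forces the components $M^{(i,\alpha)}$ in the decomposition \eqref{maindecom} to be pairwise non-isomorphic simples of $\CC_2$ (otherwise $\End_{\CC_1\otimes\CC_2}(M^i)$ would be too large relative to $\End_\CC(M^i)=\C$). Denote by $Y_i$ the $\1_{\CC_2}$-isotypic summand of $G(M^i)\otimes G((M^i)^*)$ in $\CC_1\otimes\CC_2$. Expanding the tensor product and using that $\Hom_{\CC_2}(\1,M^{(i,\alpha)}\otimes(M^{(i,\alpha')})^*)$ is nonzero only for $\alpha=\alpha'$ (and is one-dimensional by duality), one obtains
\[
Y_i \;\cong\; \bigoplus_{\alpha\in J_i} W^\alpha\otimes(W^\alpha)^*\otimes\1_{\CC_2},
\]
whence $\dim(Y_i)=\sum_{\alpha\in J_i} d_\alpha^2$. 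In particular, $\dim(Y_1)=\sum_{\beta\in J_1} d_\beta^2$ for the vacuum $V=M^1$.

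Next, I compare $Y_1$ and $Y_i$ via the vacuum embedding. Since $M^i$ is simple in $\CC$, the object $M^i\boxtimes_\CC (M^i)^*$ contains $V$ with multiplicity one, yielding $V\hookrightarrow G(M^i\boxtimes_\CC(M^i)^*)=G(M^i)\otimes_V G((M^i)^*)$ in $\CC_1\otimes\CC_2$. Composing with the quotient $G(M^i)\otimes G((M^i)^*)\twoheadrightarrow G(M^i)\otimes_V G((M^i)^*)$, the Frobenius structure of $V$ should furnish a lift of this embedding to a monomorphism $Y_1\hookrightarrow Y_i$ at the level of $\1_{\CC_2}$-isotypic summands, from which $\dim(Y_1)\leq\dim(Y_i)$ is exactly the desired inequality.

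The main obstacle is justifying the lift and its injectivity at the $\1_{\CC_2}$-isotypic level, since the balancing quotient $\otimes_V$ could in principle collapse portions of $Y_i$. I would resolve this using the Frobenius-algebra structure of $V$ (standard for connected \'etale algebras in a modular tensor category), whose counit $\epsilon\colon V\to\1_{\CC_1\otimes\CC_2}$ provides a canonical section controlling the kernel of the balancing, together with the pseudo-unitarity of $\CC_1\otimes\CC_2$ (cf.\ \cite{ENO}), which guarantees positivity of the relevant categorical dimensions and rules out any such collapse on the vacuum component.
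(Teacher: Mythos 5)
First, a remark on provenance: the paper does not actually prove this inequality; it is quoted from Theorem 6.5 of \cite{DRX2}, so there is no internal argument to measure yours against, and your proposal must stand on its own. Its first step is essentially sound: writing $G$ for your forgetful functor, the $\1_{\CC_2}$-isotypic summand $Y_i$ of $G(M^i)\otimes G((M^i)^*)$ does have dimension $\sum_{\alpha\in J_i}d_\alpha^2$, granted that the $M^{(i,\alpha)}$ are pairwise inequivalent simples. That fact is true, but it is Lemma 4.1 of \cite{DRX2} rather than a consequence of your parenthetical argument: the forgetful functor does not preserve simplicity, and $\mathrm{End}_{\CC_1\otimes\CC_2}(G(M^i))$ is already $\C^{|J_i|}$ even when the multiplicity modules are all distinct, so no contradiction with $\mathrm{End}_{\CC}(M^i)=\C$ arises the way you suggest.

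The genuine gap is in the comparison step, and it is fatal as written. The vacuum embedding $V\hookrightarrow M^i\boxtimes_\CC (M^i)^*$ produces, after splitting the balancing quotient (which exists simply by semisimplicity), a copy of $G(V)$ inside $G(M^i)\otimes G((M^i)^*)$. But $Y_1$ is the $\1_{\CC_2}$-isotypic part of $G(V)\otimes G(V)$, not of $G(V)$; the $\1_{\CC_2}$-isotypic part of $G(V)$ itself is just $W^1\otimes \1_{\CC_2}$, of dimension $1$. So the embedding you construct only yields the trivial bound $1\leq\sum_{\gamma\in J_i}d_\gamma^2$, and no map from $Y_1$ into $Y_i$ has actually been produced. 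Moreover, the heuristic you invoke to control the balancing quotient fails already in the base case $i=1$: the quotient $G(V)\otimes G(V)\twoheadrightarrow G(V)\otimes_V G(V)\cong G(V)$ is the multiplication, and it collapses $Y_1$ (of dimension $\sum_{\beta\in J_1}d_\beta^2$) onto a one-dimensional space, pseudo-unitarity notwithstanding. What your construction genuinely gives is that each $W^\beta$ with $\beta\in J_1$ occurs in $\bigoplus_{\alpha,\alpha'\in J_i}W^\alpha\boxtimes_{\CC_1}(W^{\alpha'})^*$; a containment of this kind does not by itself imply the squared-dimension inequality (already in a pointed category the ``difference set'' $J_i J_i^{-1}$ can be strictly larger than $J_i$). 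A correct proof requires more input, such as the module-category and Frobenius--Perron arguments of \cite{DRX2}; absent that, the honest course is to cite Theorem 6.5 there, as the paper does.
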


From now on, we assume $J_1=J$. First we have the following result:
\begin{proposition}\label{simpleobjects}
	All notation and assumptions are as in the beginning of this section. Furthermore, we assume $J_1=J$. Then for any $i\in I$, we have $J_i=J$ and the set $\{M^{(i,\alpha)}| i\in I, \alpha\in J\}$ is the complete set of inequivalent simple objects in $\CC_2.$
\end{proposition}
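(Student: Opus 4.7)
The plan is to chain together the three categorical results recalled in this section with the modularity of $\CC_1$. First I would observe that the hypothesis $J_1=J$ says exactly $\DD=\Or(\CC_1)$, i.e.\ $\DD$ contains every simple object of $\CC_1$. Since the double braiding condition defining $C_{\CC_1}(\DD)$ is linear in $X$ and $\CC_1$ is semisimple, this forces
\[
C_{\CC_1}(\DD)=\{Y\in\CC_1\mid c^1_{Y,X}\circ c^1_{X,Y}=\id_{X\boxtimes Y}\text{ for all }X\in\CC_1\},
\]
which is precisely the M\"uger center of $\CC_1$. Under our standing assumptions $\CC_1$ is a modular tensor category, hence non-degenerate, so its M\"uger center is trivial and $C_{\CC_1}(\DD)=\{1_{\CC_1}\}$.

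Feeding this into Proposition \ref{muger}, the condition $a_{\alpha\otimes 1}\in\CC$ forces $W^\alpha\cong 1_{\CC_1}$, and therefore $\KW=\{1_{\CC_1}\}$. Proposition \ref{kacW} then applies and gives one half of the claim: as $i$ ranges over $I$ and $\alpha$ over $J_i$, the objects $M^{(i,\alpha)}$ form a complete set of inequivalent simple objects of $\CC_2$.

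It remains to upgrade $J_i\subseteq J$ to $J_i=J$ for every $i\in I$. Here I would invoke Theorem \ref{cvalue}, which under the hypothesis $J_1=J$ gives
\[
\sum_{\beta\in J}d_\beta^2=\sum_{\beta\in J_1}d_\beta^2\le\sum_{\gamma\in J_i}d_\gamma^2.
\]
Since $J_i\subseteq J$ and each $d_\gamma^2>0$, the reverse inequality is automatic, so both sides coincide and $J_i=J$. Substituting $J$ for $J_i$ in the indexing set obtained from Proposition \ref{kacW} yields the desired classification.

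The only genuinely subtle step is the identification of $C_{\CC_1}(\DD)$ with the M\"uger center, which relies on $\DD$ hitting every simple of $\CC_1$ and on non-degeneracy of $\CC_1$; once this is in place, the proposition follows essentially formally by combining Propositions \ref{muger}, \ref{kacW} and Theorem \ref{cvalue}, with no further categorical input needed.
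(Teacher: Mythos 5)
Your argument is correct and follows essentially the same route as the paper: identify $C_{\CC_1}(\DD)$ with the M\"uger center of $\CC_1$ (trivial by modularity), deduce $\KW=\{1_{\CC_1}\}$ via Proposition \ref{muger}, apply Proposition \ref{kacW} for completeness of the simple objects, and use Theorem \ref{cvalue} together with $J_i\subseteq J$ to force $J_i=J$. The paper's proof is just a compressed version of exactly these steps.
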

\begin{proof}
	Since $J_1=J$, the first assertion is an immediate consequence of Theorem \ref{cvalue}. In this case, the \emph{M\"uger centralizer} $C_{\CC_1}(\DD)$ coincides with the \emph{M\"uger center} of $\CC_1$. So it consists only one element $U=W^1=1_{\CC_1}$ due to the fact that the category $\CC_1$ is a modular tensor category. Then an application of Proposition \ref{muger} and \ref{kacW} yields the result.
\end{proof}

Next we turn to compute fusion rules $N_{M^{(i,\alpha)},M^{(j,\beta)}}^{M^{(k,\gamma)}}$. Let $\tilde{\CC}$ be one of the categories $\CC, \CC_1, \CC_2$, and let $X$ be an object of $\tilde{\CC}$, we use $\dim_{\tilde{C}}X$ to denote the categorical dimension of $X$. 	Notice that under our settings, the categorical dimension equals the Frobenius-Perron dimension, which is exactly the quantum dimension studied in \cite{DJX} and \cite{DRX1}. For simplicity, in the following we will use the symbol $\dim_{\tilde{C}}X$ to denote any one of these dimensions. The following lemmas are needed for the proof of our main result:
\begin{lemma}\label{catedim}
	Under the assumptions of Proposition \ref{simpleobjects}, for any $i\in I$, the categorical dimension $\dim_{\CC} M^i$ equals the categorical dimension $\dim_{\CC_2} M^{(i,1)}$.
\end{lemma}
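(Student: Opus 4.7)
The plan is to reduce the question to a dimension identity in $\CC_2$ via the Kirillov--Ostrik formula \cite{KO} for local modules over an \'etale algebra in a modular tensor category:
$$\dim_{\CC} M^i \;=\; \frac{\dim_{\CC_1\otimes\CC_2} M^i}{\dim_{\CC_1\otimes\CC_2} V}.$$
Using the decomposition (\ref{maindecom}) together with $J_i=J$ from Proposition \ref{simpleobjects}, this becomes
$$\dim_{\CC} M^i \;=\; \frac{\sum_{\alpha\in J} d_\alpha\, \dim_{\CC_2} M^{(i,\alpha)}}{\sum_{\alpha\in J} d_\alpha\, \dim_{\CC_2} M^{(1,\alpha)}},$$
so proving the lemma amounts to showing that the numerator equals $\dim_{\CC_2} M^{(i,1)}$ times the denominator.

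The key step is to establish the fusion relation $M^{(i,\alpha)} \cong M^{(1,\alpha)}\boxtimes_{\CC_2} M^{(i,1)}$ in $\CC_2$; given this, $\dim_{\CC_2} M^{(i,\alpha)} = \dim_{\CC_2} M^{(1,\alpha)}\cdot \dim_{\CC_2} M^{(i,1)}$, and the common factor pulls out of every summand. I would obtain this relation via the induction functor $F\colon \CC_1\otimes \CC_2\to \CC$, $F(X)=V\boxtimes_{\CC_1\otimes \CC_2} X$, applied to $1_{\CC_1}\otimes M^{(i,1)}$. On the one hand, a direct componentwise computation in $\CC_1\otimes \CC_2$ gives
$$F(1_{\CC_1}\otimes M^{(i,1)}) \cong \bigoplus_{\alpha\in J} W^\alpha\otimes \bigl(M^{(1,\alpha)}\boxtimes_{\CC_2} M^{(i,1)}\bigr).$$
On the other hand, Frobenius reciprocity combined with the decomposition of $M^j$ yields
$$\Hom_{\CC}\bigl(F(1_{\CC_1}\otimes M^{(i,1)}),M^j\bigr) \cong \Hom_{\CC_2}(M^{(i,1)},M^{(j,1)}),$$
which is $\C$ for $j=i$ and zero otherwise, using that each $M^{(j,1)}$ is simple by Proposition \ref{simpleobjects} and that the labelling of $\Or(\CC_2)$ forces $j\mapsto M^{(j,1)}$ to be injective. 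Semisimplicity of $\CC$ then gives $F(1_{\CC_1}\otimes M^{(i,1)})\cong M^i$, and matching $W^\alpha$-isotypic components with the decomposition $M^i=\bigoplus_\alpha W^\alpha\otimes M^{(i,\alpha)}$ delivers the fusion relation.

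The main obstacle I anticipate is verifying that $F(1_{\CC_1}\otimes M^{(i,1)})$ already lies in the local subcategory $\CC$, rather than merely in the ambient category of $V$-modules. I would reduce this to checking that $1_{\CC_1}\otimes M^{(i,1)}$ centralizes $V$ in $\CC_1\otimes \CC_2$, which by componentwise braiding amounts to showing that $M^{(i,1)}$ has trivial monodromy with each $M^{(1,\alpha)}$; this should follow from the ``mirror'' side of the Kac--Wakimoto condition $\KW=\{1_{\CC_1}\}$ together with the modularity of $\CC_1$. A secondary subtlety is the injectivity of $j\mapsto M^{(j,1)}$, which I would read from the ``complete set of inequivalent simple objects'' statement in Proposition \ref{simpleobjects}, interpreted as a genuine bijection between $I\times J$ and $\Or(\CC_2)$.
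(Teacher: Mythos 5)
Your overall strategy is close to the paper's: both arguments hinge on inducing $W^1\otimes M^{(i,1)}$ up to $V$ and using Frobenius reciprocity to identify the induced object with $M^i$. However, the step you yourself flag as ``the main obstacle'' is a genuine gap, and it is precisely the point where the paper's proof supplies an input you do not. Without knowing in advance that $F(1_{\CC_1}\otimes M^{(i,1)})$ is local (or simple), Frobenius reciprocity in $(\CC_1\otimes\CC_2)_V$ only identifies the \emph{local part} of the induced object with $M^i$; a priori there could be additional non-local simple summands, in which case your componentwise decomposition $\bigoplus_\alpha W^\alpha\otimes\bigl(M^{(1,\alpha)}\boxtimes_{\CC_2}M^{(i,1)}\bigr)$ would strictly contain $\bigoplus_\alpha W^\alpha\otimes M^{(i,\alpha)}$, and both the fusion relation and the dimension identity would fail. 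Your proposed remedy does not close this: the Kac--Wakimoto condition $\KW=\{1_{\CC_1}\}$ and Proposition \ref{muger} are statements about inducing objects of the form $W^\beta\otimes 1_{\CC_2}$, i.e.\ about which $W^\beta$ centralize $\DD\subset\CC_1$; what you need is the \emph{mirrored} statement that $M^{(i,1)}$ has trivial monodromy with every $M^{(1,\alpha)}$ in $\CC_2$, and neither $\KW=\{1_{\CC_1}\}$ nor modularity of $\CC_1$ yields that without a further argument. (The claim is true a posteriori, but only because $F(W^1\otimes M^{(i,1)})\cong M^i$ holds, which is what is being proved.)

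The paper closes the gap differently and more economically: it invokes Lemma 4.1 of \cite{DRX2}, which asserts that $V\boxtimes_{\CC_1\otimes\CC_2}(W^1\otimes M^{(i,1)})$ is already \emph{simple} in the full module category $(\CC_1\otimes\CC_2)_V$. Simplicity together with the nonzero Hom into the local simple module $M^i$ forces $F(W^1\otimes M^{(i,1)})\cong M^i$, which delivers locality for free, and the lemma then follows in one line from $\dim_{\CC}F(X)=\dim_{\CC_1}W^1\cdot\dim_{\CC_2}M^{(i,1)}$ --- no Kirillov--Ostrik quotient formula and no detour through the fusion relation $M^{(i,\alpha)}\cong M^{(1,\alpha)}\boxtimes_{\CC_2}M^{(i,1)}$ (which is the content of a \emph{later} lemma in the paper, itself deduced from the identification $M^i\cong F(W^1\otimes M^{(i,1)})$). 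To repair your proof, replace the heuristic appeal to the ``mirror'' Kac--Wakimoto condition by a citation of that simplicity result, after which the rest of your computation goes through.
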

\begin{proof}
	First, by Lemma 4.1 in \cite{DRX2}, we have $V\boxtimes_{\CC_1\otimes \CC_2}(W^1\otimes M^{(i,1)})$ is a simple object in $(\CC_1\otimes \CC_2)_V$. Since $(\CC_1\otimes \CC_2)_V$ is a fusion category, we have
	\begin{eqnarray*}
		\Hom_V(V\boxtimes_{\CC_1\otimes \CC_2}(W^1\otimes M^{(i,1)}), M^{i})
		&\cong& \Hom_{\CC_1\otimes \CC_2}\left(W^1\otimes M^{(i,1)}, \sum_{\alpha\in J}W^{\alpha}\otimes M^{(i,\alpha)}\right)\\
		&=& \Hom_{\CC_1\otimes \CC_2}\left(W^1\otimes M^{(i,1)}, W^1\otimes M^{(i,1)}\right),
	\end{eqnarray*}
	so we get $V\boxtimes_{\CC_1\otimes \CC_2}(W^1\otimes M^{(i,1)})$ is a simple object in $(\CC_1\otimes \CC_2)_V^0$ and $M^i\cong V\boxtimes_{\CC_1\otimes \CC_2}(W^1\otimes M^{(i,1)})$ as a $V$-module.
	Thus, $$\dim_{\CC}M^i=\dim_{\CC}V\boxtimes_{\CC_1\otimes \CC_2}(W^1\otimes M^{(i,1)})=\dim_{\CC_1}W^1\cdot\dim_{\CC_2}M^{(i,1)}=\dim_{\CC_2}M^{(i,1)}.$$
\end{proof}

\begin{lemma}
	Under the assumptions of Proposition \ref{simpleobjects}, for any $i, j, k\in I$, we have
	\begin{equation*}
		N_{M^i, M^j}^{M^k}=N_{ M^{(i,1)},M^{(j,1)}}^{M^{(k,1)}}.
	\end{equation*}
\end{lemma}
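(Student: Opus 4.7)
The plan is to reduce the fusion rule computation in $\CC$ to one in $\CC_2$ via the induction functor
$$F:\CC_1\otimes\CC_2\longrightarrow(\CC_1\otimes\CC_2)_V,\qquad F(X)=V\boxtimes_{\CC_1\otimes\CC_2}X.$$
Since $V$ is a (local) commutative algebra in the braided tensor category $\CC_1\otimes\CC_2$, the functor $F$ is monoidal and admits the forgetful functor $G$ as a right adjoint (Frobenius reciprocity). First I would extract from the proof of Lemma \ref{catedim} the identification $M^i\cong F(1_{\CC_1}\otimes M^{(i,1)})$, which in particular lies in the local subcategory $\CC=(\CC_1\otimes\CC_2)_V^0$, so that the tensor product over $V$ of such objects can be computed either in $\CC$ or in $(\CC_1\otimes\CC_2)_V$.

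Next I would combine the monoidality of $F$ with the obvious isomorphism $(1_{\CC_1}\otimes X)\boxtimes_{\CC_1\otimes\CC_2}(1_{\CC_1}\otimes Y)\cong 1_{\CC_1}\otimes(X\boxtimes_{\CC_2}Y)$ to get
$$M^i\boxtimes_V M^j\;\cong\;F\!\bigl(1_{\CC_1}\otimes(M^{(i,1)}\boxtimes_{\CC_2}M^{(j,1)})\bigr).$$
Then Frobenius reciprocity gives
$$\Hom_\CC(M^i\boxtimes_V M^j,\,M^k)\;\cong\;\Hom_{\CC_1\otimes\CC_2}\!\bigl(1_{\CC_1}\otimes(M^{(i,1)}\boxtimes_{\CC_2}M^{(j,1)}),\,G(M^k)\bigr).$$
Substituting the decomposition $G(M^k)=\bigoplus_{\alpha\in J}W^\alpha\otimes M^{(k,\alpha)}$ from \eqref{maindecom} and using that $\Hom_{\CC_1}(1_{\CC_1},W^\alpha)$ vanishes unless $\alpha=1$, only the $\alpha=1$ summand survives and the right-hand side collapses to $\Hom_{\CC_2}(M^{(i,1)}\boxtimes_{\CC_2}M^{(j,1)},\,M^{(k,1)})$. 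Taking dimensions and invoking the simplicity of $M^k$ in $\CC$ and of $M^{(k,1)}$ in $\CC_2$ (the latter from Proposition \ref{simpleobjects}) yields the desired equality $N^{M^k}_{M^i,M^j}=N^{M^{(k,1)}}_{M^{(i,1)},M^{(j,1)}}$.

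The main technical point to handle carefully is the second step: the tensor-functoriality of induction, the identity $F(X)\boxtimes_V F(Y)\cong F(X\boxtimes_{\CC_1\otimes\CC_2}Y)$, and the fact that in $\CC$ this $\boxtimes_V$ agrees with the tensor product computed in $(\CC_1\otimes\CC_2)_V$. These are standard facts for commutative algebras in braided tensor categories (see \cite{HKL, KO}) and are used implicitly throughout \cite{DRX2}; I would cite them rather than reprove them. Once this bracketing is in place, the rest of the argument is a formal unwinding via Frobenius reciprocity and the orthogonality of simples in $\CC_1$.
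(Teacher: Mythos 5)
Your argument is correct, but it follows a genuinely different route from the paper's. The paper proves the lemma in two steps: it first invokes Proposition 2.9 of \cite{ADL} (restriction of intertwining operators to the $W^1\otimes M^{(\cdot,1)}$ components of the decomposition \eqref{maindecom}) to obtain only the one-sided inequality $N_{M^i,M^j}^{M^k}\le N_{M^{(i,1)},M^{(j,1)}}^{M^{(k,1)}}$, and then compares $\sum_{k}N_{M^i,M^j}^{M^k}\dim_{\CC}M^k$ with $\sum_{k}N_{M^{(i,1)},M^{(j,1)}}^{M^{(k,1)}}\dim_{\CC_2}M^{(k,1)}$ using Lemma \ref{catedim} and multiplicativity of dimensions; since both sums equal $\dim_{\CC}M^i\cdot\dim_{\CC}M^j$, every termwise inequality must be an equality (a step that tacitly uses positivity of the dimensions, available because $\CC_1,\CC_2$ are pseudo-unitary). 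You instead produce the equality directly as an isomorphism of Hom spaces: starting from $M^i\cong V\boxtimes_{\CC_1\otimes\CC_2}(W^1\otimes M^{(i,1)})$ (which is indeed established inside the paper's proof of Lemma \ref{catedim}), you use the monoidality of the induction functor for the commutative algebra $V$, Frobenius reciprocity, and the vanishing of $\Hom_{\CC_1}(1_{\CC_1},W^\alpha)$ for $\alpha\neq 1$. The trade-off is that your route leans on the tensor-functoriality of induction and on the identification of the VOA fusion product with the categorical tensor product in $\CC=(\CC_1\otimes\CC_2)_V^0$ (both standard, via \cite{KO,HKL}, and already used implicitly in the paper's Lemma 4.6), whereas the paper's route avoids monoidality of $F$ at the price of the ADL inequality plus a positivity-of-dimensions count. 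Your intermediate isomorphism $M^i\boxtimes_V M^j\cong V\boxtimes_{\CC_1\otimes\CC_2}\bigl(1_{\CC_1}\otimes(M^{(i,1)}\boxtimes_{\CC_2}M^{(j,1)})\bigr)$ is also slightly stronger than the numerical statement and would feed directly into the subsequent lemmas; just make sure to state explicitly that $M^{(k,1)}$ is simple in $\CC_2$ (from Proposition \ref{simpleobjects}) when converting the Hom-space dimension into the fusion coefficient.
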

\begin{proof}
	By Proposition 2.9 in \cite{ADL}, we have
	\begin{equation}\label{fusioninequal}
		N_{M^i, M^j}^{M^k}\leq N_{W^1\otimes M^{(i,1)},W^1\otimes M^{(j,1)}}^{M^k}=N_{W^1\otimes M^{(i,1)},W^1\otimes M^{(j,1)}}^{W^1\otimes M^{(k,1)}}=N_{M^{(i,1)},M^{(j,1)}}^{M^{(k,1)}}.
	\end{equation}
	Moreover, applying (\ref{fusioninequal}) and Lemma \ref{catedim} we have
	\begin{eqnarray*}
		\dim_{\CC}M^{i}\cdot \dim_{\CC}M^{j}
		&=&\dim_{\CC}(M^i\boxtimes_{\CC}M^j)\\
		&=&\dim_{\CC}\left(\sum_{k\in I}N_{M^i, M^j}^{M^k}M^k\right)\\
		&=&\sum_{k\in I}N_{M^i, M^j}^{M^k}\dim_{\CC}M^k\\
		&\leq&\sum_{k\in I}N_{M^{(i,1)},M^{(j,1)}}^{M^{(k,1)}}\dim_{\CC_2}M^{(k,1)}\\
		&=&\dim_{\CC_2}\left(\sum_{k\in I}N_{M^{(i,1)},M^{(j,1)}}^{M^{(k,1)}}M^{(k,1)}\right)\\
		&=&\dim_{\CC_2}\left(M^{(i,1)}\boxtimes_{ \CC_2}M^{(j,1)}\right)\\
		&=&\dim_{\CC_2}M^{(i,1)}\cdot \dim_{ \CC_2}M^{(j,1)}\\
		&=&\dim_{\CC}M^{i}\cdot \dim_{\CC}M^{j}.
	\end{eqnarray*}
	Hence $N_{M^i, M^j}^{M^k}=N_{ M^{(i,1)},M^{(j,1)}}^{M^{(k,1)}}.$
	
\end{proof}

\begin{lemma}
	Under the assumptions of Proposition \ref{simpleobjects} (without requiring $J_1=J$), for any $\alpha, \beta, \gamma \in J$, we have
	\begin{equation*}
		N_{W^{\alpha}, W^{\beta}}^{W^{\gamma}}=N_{ M^{(1,\alpha)},M^{(1, \beta)}}^{M^{(1, \gamma)}} \quad\text{and} \quad \dim_{\CC_1}W^{\alpha}=\dim_{\CC_2}M^{(1,\alpha)}.
	\end{equation*}
\end{lemma}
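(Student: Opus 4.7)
The plan is to mimic the technique of Lemma \ref{catedim} inside the (not necessarily local) $V$-module category $(\CC_1\otimes\CC_2)_V$, with a duality twist exploiting the symmetry between $U$ and $U^c$ afforded by $U=U^{cc}$. The key step is to establish, for each $\alpha\in J_1$, an isomorphism
\[
V\boxtimes_{\CC_1\otimes\CC_2}(W^\alpha\otimes N^1)\;\cong\;V\boxtimes_{\CC_1\otimes\CC_2}(W^1\otimes (M^{(1,\alpha)})^*)
\]
in $(\CC_1\otimes\CC_2)_V$. Both sides are simple: a direct computation of the endomorphism algebra via the induction--restriction adjunction and the decomposition $V\cong\bigoplus_{\beta\in J_1}W^\beta\otimes M^{(1,\beta)}$ shows it equals $\C$ on each side (crucially using that $M^{(1,\beta)}\cong N^1$ only for $\beta=1$, together with $W^1\boxtimes W^\alpha\cong W^\alpha$). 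The same adjunction produces a non-zero morphism between them via
\[
\Hom_V\!\bigl(V\boxtimes(W^\alpha\otimes N^1),\,V\boxtimes(W^1\otimes (M^{(1,\alpha)})^*)\bigr)\;\cong\;\Hom_{\CC_2}\!\bigl(N^1,\,M^{(1,\alpha)}\boxtimes(M^{(1,\alpha)})^*\bigr)\;\cong\;\C,
\]
so the two simples are isomorphic.

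Granting this isomorphism, the dimension equality is immediate: restricting both sides to $\CC_1\otimes\CC_2$ and equating categorical dimensions yields $\dim_{\CC_1\otimes\CC_2}V\cdot\dim_{\CC_1}W^\alpha=\dim_{\CC_1\otimes\CC_2}V\cdot\dim_{\CC_2}(M^{(1,\alpha)})^*$, so $\dim_{\CC_1}W^\alpha=\dim_{\CC_2}M^{(1,\alpha)}$ after cancellation (using that duality preserves dimension). For the fusion rule equality, I would compute $V\boxtimes(W^\alpha\otimes N^1)\otimes_V V\boxtimes(W^\beta\otimes N^1)$ in $(\CC_1\otimes\CC_2)_V$ in two ways. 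The standard free-module identity $(V\boxtimes X)\otimes_V(V\boxtimes Y)\cong V\boxtimes(X\otimes Y)$ gives
\[
\bigoplus_{\gamma\in J_1}N^{W^\gamma}_{W^\alpha,W^\beta}\,V\boxtimes(W^\gamma\otimes N^1),
\]
where the sum is restricted to $J_1$ because $\DD=\{W^\gamma:\gamma\in J_1\}$ is a fusion subcategory of $\CC_1$ by Proposition 4.9 of \cite{DRX2}. Substituting the key isomorphism into each factor identifies the same object with
\[
V\boxtimes\bigl(W^1\otimes((M^{(1,\alpha)})^*\boxtimes (M^{(1,\beta)})^*)\bigr)\;\cong\;\bigoplus_{\gamma\in J_1}N^{(M^{(1,\gamma)})^*}_{(M^{(1,\alpha)})^*,(M^{(1,\beta)})^*}\,V\boxtimes(W^1\otimes (M^{(1,\gamma)})^*).
\]
Matching coefficients of the identified simples and using the duality/braiding invariance $N^{(M^{(1,\gamma)})^*}_{(M^{(1,\alpha)})^*,(M^{(1,\beta)})^*}=N^{M^{(1,\gamma)}}_{M^{(1,\alpha)},M^{(1,\beta)}}$ then yields the claimed identity.

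The main obstacle is justifying the unambiguity of the coefficient matching in the fusion step: one must verify that $\{V\boxtimes(W^\gamma\otimes N^1):\gamma\in J_1\}$ forms a set of pairwise inequivalent simple objects in $(\CC_1\otimes\CC_2)_V$. This reduces to a Hom-space computation via the adjunction, yielding $\dim\Hom_V(V\boxtimes(W^\alpha\otimes N^1),V\boxtimes(W^\gamma\otimes N^1))=N^{W^\alpha}_{W^1,W^\gamma}=\delta_{\alpha,\gamma}$, which together with the simplicity established above completes the argument.
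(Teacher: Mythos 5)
Your proposal is correct, but it is a genuinely different presentation from the paper's: the paper does not prove this lemma at all, it simply cites Theorem 4.2 of \cite{DRX2} (see also Theorem 3.1 of \cite{Lin}), i.e.\ the mirror-extension statement that $\alpha\mapsto M^{(1,\alpha)}$ induces a braided equivalence between the subcategory generated by $\DD$ and its image in $\CC_2$. What you have written is, in substance, a reconstruction of the proof underlying those cited results, carried out with the Kirillov--Ostrik induction machinery: the key isomorphism $V\boxtimes_{\CC_1\otimes\CC_2}(W^\alpha\otimes N^1)\cong V\boxtimes_{\CC_1\otimes\CC_2}(W^1\otimes (M^{(1,\alpha)})^*)$ in the fusion category $(\CC_1\otimes\CC_2)_V$, simplicity and pairwise inequivalence of these induced objects via Frobenius reciprocity, and the monoidality of induction $(V\boxtimes X)\boxtimes_V(V\boxtimes Y)\cong V\boxtimes(X\boxtimes Y)$. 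All of these steps check out; in particular you correctly isolate where the hypothesis $U=U^{cc}$ enters (it forces $M^{(1,\beta)}\cong N^1$ only for $\beta=1$, which is what makes the endomorphism algebras one-dimensional), and the duality invariance $N^{Z^*}_{X^*,Y^*}=N^{Z}_{X,Y}$ you invoke is valid in the braided fusion category $\CC_2$. The trade-off is the obvious one: the paper's citation is shorter, while your argument is self-contained modulo standard facts about \'etale algebras in fusion categories and makes the mechanism visible. One cosmetic point, which is a defect of the statement rather than of your proof: the lemma as printed quantifies over $\alpha,\beta,\gamma\in J$, but $M^{(1,\alpha)}$ is only defined for $\alpha\in J_1$; your argument correctly operates on $J_1$ throughout (e.g.\ the nonzero morphism uses that $W^\alpha$ actually occurs in the decomposition of $V$), which is the intended reading.
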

\begin{proof}
	This result follows from Theorem 4.2 in \cite{DRX2}; see also Theorem 3.1 in \cite{Lin}.
\end{proof}

\begin{lemma}
	Under the assumptions of Proposition \ref{simpleobjects}, for any $i\in I, \alpha\in J$, we have
	\begin{equation*}
		M^{(i,1)}\boxtimes_{ \CC_2}M^{(1,\alpha)}\cong M^{(i,\alpha)}.
	\end{equation*}
\end{lemma}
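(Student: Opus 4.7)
The plan is to re-examine the identification $M^i \cong V \boxtimes_{\CC_1 \otimes \CC_2}(W^1 \otimes M^{(i,1)})$ already obtained in the proof of Lemma \ref{catedim}, but now to compare both sides as $\CC_1 \otimes \CC_2$-objects rather than only as simple objects in $\CC$. The key observation is that the left-hand side comes equipped with the given decomposition \eqref{maindecom}, while the right-hand side can be re-expanded using the explicit decomposition $V \cong \bigoplus_{\alpha \in J} W^\alpha \otimes M^{(1,\alpha)}$; matching $\CC_1$-isotypic components then forces the claimed $\CC_2$-fusion identity.

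Concretely, I would proceed as follows. After the identification $M^i \cong V \boxtimes_{\CC_1 \otimes \CC_2}(W^1 \otimes M^{(i,1)})$, I apply the forgetful functor to $\CC_1 \otimes \CC_2$ and expand using that $\boxtimes_{\CC_1 \otimes \CC_2}$ factors on pure tensors, distributes over direct sums, and that $W^1 = 1_{\CC_1}$ is the unit:
\begin{equation*}
V \boxtimes_{\CC_1 \otimes \CC_2}\bigl(W^1 \otimes M^{(i,1)}\bigr) \cong \bigoplus_{\alpha \in J}\bigl(W^\alpha \boxtimes_{\CC_1} W^1\bigr) \otimes \bigl(M^{(1,\alpha)} \boxtimes_{\CC_2} M^{(i,1)}\bigr) \cong \bigoplus_{\alpha \in J} W^\alpha \otimes \bigl(M^{(1,\alpha)} \boxtimes_{\CC_2} M^{(i,1)}\bigr).
\end{equation*}
On the other hand, Proposition \ref{simpleobjects} yields $J_i = J$, so \eqref{maindecom} gives $M^i \cong \bigoplus_{\alpha \in J} W^\alpha \otimes M^{(i,\alpha)}$ in $\CC_1 \otimes \CC_2$. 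Since the objects $\{W^\alpha\}_{\alpha \in J}$ are pairwise inequivalent simples of the semisimple category $\CC_1$, reading off the multiplicity of $W^\alpha$ on each side forces $M^{(1,\alpha)} \boxtimes_{\CC_2} M^{(i,1)} \cong M^{(i,\alpha)}$. Finally, the braided symmetry of $\CC_2$ exchanges the two tensor factors, producing the stated isomorphism $M^{(i,1)} \boxtimes_{\CC_2} M^{(1,\alpha)} \cong M^{(i,\alpha)}$.

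I do not anticipate a substantial obstacle. The only point that requires some care is verifying that the isomorphism $M^i \cong V \boxtimes_{\CC_1 \otimes \CC_2}(W^1 \otimes M^{(i,1)})$ of Lemma \ref{catedim} is respected by the forgetful functor to $\CC_1 \otimes \CC_2$; but this is automatic, since that isomorphism is already a morphism of $V$-modules in $(\CC_1 \otimes \CC_2)_V$, hence in particular a morphism in $\CC_1 \otimes \CC_2$ upon forgetting the $V$-action. Everything else reduces to formal manipulations with the Deligne product and the braided structure on $\CC_2$.
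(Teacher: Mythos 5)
Your proposal is correct and follows essentially the same route as the paper: identify $M^i$ with $V\boxtimes_{\CC_1\otimes\CC_2}(W^1\otimes M^{(i,1)})$, expand $V$ via \eqref{maindecom}, distribute the Deligne tensor product, and compare $W^\alpha$-isotypic components. You merely make explicit two points the paper leaves implicit (the passage through the forgetful functor and the use of the braiding to commute the two factors), which is fine.
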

\begin{proof}
	First, by an argument similar to that in the proof of Lemma \ref{catedim}, we have $M^i\cong V\boxtimes_{\CC_1\otimes \CC_2}(W^1\otimes M^{(i,1)})$ as $V$-modules. Then as a $U\otimes U^c$-module, we have
	\begin{eqnarray*}
		M^{i}
		&\cong&\left(\sum_{\alpha \in J}W^{\alpha}\otimes M^{(1,\alpha)}\right)\boxtimes_{\CC_1\otimes \CC_2}(W^1\otimes M^{(i,1)})\\
		&\cong&\sum_{\alpha \in J}\left(W^{\alpha}\boxtimes_{\CC_1} W^{1}\right)\otimes\left(M^{(1,\alpha)}\boxtimes_{\CC_2} M^{(i,1)}\right)\\
		&\cong&\sum_{\alpha \in J}W^{\alpha}\otimes\left(M^{(1,\alpha)}\boxtimes_{\CC_2} M^{(i,1)}\right).
	\end{eqnarray*}
	Hence $M^{(i,1)}\boxtimes_{ \CC_2}M^{(1,\alpha)}\cong M^{(i,\alpha)}.$
	
\end{proof}
Now we prove our main result:
\begin{theorem}\label{fusionrule1}
	Under the assumptions of Proposition \ref{simpleobjects},  for any $i, j, k \in I$, and $\alpha, \beta, \gamma \in J$, we have the fusion rules $$N_{M^{(i,\alpha)},M^{(j,\beta)}}^{M^{(k,\gamma)}}=N_{M^i,M^j}^{M^k} N_{W^{\alpha},W^{\beta}}^{W^{\gamma}}.$$
\end{theorem}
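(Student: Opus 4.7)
The plan is to reduce the general fusion coefficient to the product of two already-established special cases by exploiting the factorization $M^{(i,\alpha)}\cong M^{(i,1)}\boxtimes_{\CC_2}M^{(1,\alpha)}$ from the preceding lemma, combined with the braided structure on the modular tensor category $\CC_2$. First I would compute
\[
M^{(i,\alpha)}\boxtimes_{\CC_2}M^{(j,\beta)}\cong \bigl(M^{(i,1)}\boxtimes_{\CC_2}M^{(1,\alpha)}\bigr)\boxtimes_{\CC_2}\bigl(M^{(j,1)}\boxtimes_{\CC_2}M^{(1,\beta)}\bigr),
\]
and then rearrange the four factors using the associativity and commutativity constraints in $\CC_2$ to obtain
\[
M^{(i,\alpha)}\boxtimes_{\CC_2}M^{(j,\beta)}\cong \bigl(M^{(i,1)}\boxtimes_{\CC_2}M^{(j,1)}\bigr)\boxtimes_{\CC_2}\bigl(M^{(1,\alpha)}\boxtimes_{\CC_2}M^{(1,\beta)}\bigr).
\]

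Next I would apply the intermediate lemmas that identify $N_{M^i,M^j}^{M^k}=N_{M^{(i,1)},M^{(j,1)}}^{M^{(k,1)}}$ and $N_{W^{\alpha},W^{\beta}}^{W^{\gamma}}=N_{M^{(1,\alpha)},M^{(1,\beta)}}^{M^{(1,\gamma)}}$ to expand each of the two factors into a direct sum of simple objects in $\CC_2$, giving
\[
M^{(i,\alpha)}\boxtimes_{\CC_2}M^{(j,\beta)}\cong \bigoplus_{k,\gamma}N_{M^i,M^j}^{M^k}\,N_{W^{\alpha},W^{\beta}}^{W^{\gamma}}\,\bigl(M^{(k,1)}\boxtimes_{\CC_2}M^{(1,\gamma)}\bigr).
\]
A second application of the factorization lemma, now in the opposite direction, converts each parenthesised fusion product back into $M^{(k,\gamma)}$, yielding
\[
M^{(i,\alpha)}\boxtimes_{\CC_2}M^{(j,\beta)}\cong \bigoplus_{k,\gamma}N_{M^i,M^j}^{M^k}\,N_{W^{\alpha},W^{\beta}}^{W^{\gamma}}\,M^{(k,\gamma)}.
\]

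To conclude, I would invoke Proposition \ref{simpleobjects}, which guarantees that $\{M^{(k,\gamma)}\mid k\in I,\gamma\in J\}$ is a complete list of pairwise non-isomorphic simple objects in $\CC_2$. Reading off multiplicities on both sides of the decomposition of $M^{(i,\alpha)}\boxtimes_{\CC_2}M^{(j,\beta)}$ then gives the claimed identity. The step that requires the most care is the rearrangement in the second display, since it uses the braided structure of $\CC_2$ in an essential way; however, this is routine in any braided fusion category, so I do not anticipate a serious obstacle. The overall proof is a bookkeeping argument assembling the three preceding lemmas, the factorization $M^{(i,\alpha)}\cong M^{(i,1)}\boxtimes_{\CC_2}M^{(1,\alpha)}$ being the structural input that makes the multiplicative splitting possible.
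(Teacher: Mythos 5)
Your proposal is correct and follows essentially the same route as the paper: both factor $M^{(i,\alpha)}\cong M^{(i,1)}\boxtimes_{\mathcal{C}_2}M^{(1,\alpha)}$, rearrange the fourfold product via associativity and the braiding in $\mathcal{C}_2$, expand using the two intermediate lemmas identifying $N_{M^i,M^j}^{M^k}$ and $N_{W^{\alpha},W^{\beta}}^{W^{\gamma}}$ with fusion coefficients in $\mathcal{C}_2$, and reassemble $M^{(k,1)}\boxtimes_{\mathcal{C}_2}M^{(1,\gamma)}\cong M^{(k,\gamma)}$ before reading off multiplicities against the complete list of simple objects from Proposition \ref{simpleobjects}. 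No gaps to report.
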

\begin{proof}
	Applying the above several lemmas, we have
	\begin{eqnarray*}
		M^{(i,\alpha)}\boxtimes_{ \CC_2}M^{(j,\beta)} &\cong&(M^{(i,1)}\boxtimes_{ \CC_2}M^{(1,\alpha)})
		\boxtimes_{ \CC_2}(M^{(j,1)}\boxtimes_{ \CC_2}M^{(1,\beta)})\\
		&\cong&(M^{(i,1)}\boxtimes_{ \CC_2}M^{(j,1)})
		\boxtimes_{ \CC_2}(M^{(1,\alpha)}\boxtimes_{ \CC_2}M^{(1,\beta)})\\
		&\cong&\left(\sum_{k\in I}N_{ M^{(i,1)},M^{(j,1)}}^{M^{(k,1)}}M^{(k, 1)}\right)\boxtimes_{ \CC_2}
		\left(\sum_{\gamma\in J}N_{ M^{(1,\alpha)},M^{(1, \beta)}}^{M^{(1, \gamma)}}M^{(1,\gamma)}\right)\\
		&\cong&\left(\sum_{k\in I}N_{M^i,M^j}^{M^k}M^{(k, 1)}\right)\boxtimes_{ \CC_2}
		\left(\sum_{\gamma\in J}N_{W^{\alpha},W^{\beta}}^{W^{\gamma}}M^{(1,\gamma)}\right)\\
		& =&\sum_{\substack{k\in I \\ \gamma \in J}}N_{M^i,M^j}^{M^k} N_{W^{\alpha},W^{\beta}}^{W^{\gamma}}\left(M^{(k, 1)}\boxtimes_{ \CC_2}M^{(1,\gamma)}\right)\\
		&\cong&\sum_{\substack{k\in I \\ \gamma \in J}}N_{M^i,M^j}^{M^k} N_{W^{\alpha},W^{\beta}}^{W^{\gamma}}M^{(k,\gamma)}.
	\end{eqnarray*}
	Hence $N_{M^{(i,\alpha)},M^{(j,\beta)}}^{M^{(k,\gamma)}}=N_{M^i,M^j}^{M^k} N_{W^{\alpha},W^{\beta}}^{W^{\gamma}}.$
\end{proof}

Next we give a proof of the fusion rules of irreducible $L(21/22,0)\oplus L(21/22, 8)$-modules. For simplicity, we will use $(r, s)$ to denote irreducible $L(21/22,0)$-module $L(c_9, h_{r,s}^9)$, $1 \le s \le r \le 10$. We recall the notion of admissible triples introduced in \cite{W}. In our settings, $p$, $q$ in the following definition will be $11$, $12$ respectively.
\begin{definition}
	Assume $p, q\in \{2, 3, 4, \cdots\}$ and $p, q $ are relatively prime. An ordered triple of pairs of integers $((r,s), (r',s'), (r^{''}, s^{''}))$  is called admissible if $0<r,r',r^{''}<p$, $0<s,s',s^{''}<q,$ $r+r'+r^{''}<2p$, $s+s'+s^{''}<2q,$ $r+r'>r^{''},$ $r+r^{''}>r',$ $r'+r^{''}>r,$ $s+s'>s^{''},$ $s+s^{''}>s'$, $s'+s^{''}>s$, and both $r+r'+r^{''}$ and $s+s'+s^{''}$ are odd.
\end{definition}

The fusion rules for $U_{3C}$-modules are expressed in terms of admissible triples \cite{DZh}.
\begin{proposition}
	The fusion rules among the irreducible $U_{3C}$-modules $U(2k)$, for $k=0,\dots, 4,$ are as follows:
	$$U(2i)\boxtimes U(2j)=\oplus_{k}U(2k),$$
	where the summation over $k$ is for $i, j, k \in \{0,1,2,3,4\}$ satisfying $((2i-1,1), (2j-1,1),(2k-1,1))$ is an admissible triple.
\end{proposition}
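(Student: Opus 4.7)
The plan is to derive the claimed fusion rules by transporting them from the Virasoro minimal model $L(c_9, 0)$ via the coset realization of $U_{3C}$ inside $V_{\sqrt{2}E_8}$ established in \cite{LYY}. The key structural observation is that the decomposition (\ref{E8decom}) pairs each irreducible $U_{3C}$-module $U(2k)$ with a distinguished Virasoro primary in the $L(c_9, 0)$-factor, inducing a bijection between $\{U(2k) : k = 0, 1, 2, 3, 4\}$ and a distinguished subset of the Kac table of $L(c_9, 0)$ consisting of labels of the form $(r, 1)$ with $r$ odd.

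First, I would match the indexing by inspecting Theorem \ref{3cmod}: the summands $L(1/2, 0) \otimes L(c_9, h^9_{r, 1})$ with $r$ odd are visible in $U(2k)$ since $h^9_{1,1} = 0$, $h^9_{3,1} = 13/11$, $h^9_{5,1} = 50/11$, $h^9_{7,1} = 111/11$, and $h^9_{9,1} = 196/11$ all appear as conformal weights in the branching rules. Up to the indexing convention used in \cite{DZh}, this identifies the $U_{3C}$-modules with the Virasoro labels appearing in the statement of the proposition.

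Second, I would apply the coset-fusion-rule framework (either Theorem \ref{fusionrule1} of the present paper, or its analogue in \cite{DZh}) to the coset pair $(V_{\sqrt{2}E_8}, L(c_1, 0) \otimes \cdots \otimes L(c_8, 0))$ whose commutant is $U_{3C}$. This reduces the computation of the $U_{3C}$-fusion coefficients $N_{U(2i), U(2j)}^{U(2k)}$ to the Virasoro fusion coefficients between the corresponding primaries. By Wang's theorem \cite{W}, the Virasoro fusion rules for $L(c_9, 0)$ are given by admissible triples for $(p, q) = (11, 12)$, and the sub-family of labels $(r, 1)$ with $r$ odd is closed under fusion: the parity constraint forces $s_3 = 1$ and $r_3$ odd whenever $s_1 = s_2 = 1$ and $r_1, r_2$ are odd. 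Restricting accordingly yields exactly the admissibility condition stated in the proposition.

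The main obstacle I anticipate is the rigorous verification of the hypotheses of Theorem \ref{fusionrule1}, in particular the condition $J_1 = J$, which requires that every odd $(r, 1)$ label in the Kac table of $L(c_9, 0)$ actually appear in the vacuum decomposition of $V_{\sqrt{2}E_8}$; this can be checked via the parity restriction $k_j \equiv 0 \pmod{2}$ in (\ref{E8decom}) combined with the Kac-table symmetry $h^m_{r,s} = h^m_{m+2-r, m+3-s}$. Alternatively, and most efficiently, the proof can be completed by direct citation to \cite{DZh}, where this exact fusion-rule computation has already been carried out.
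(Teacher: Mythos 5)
The paper does not actually prove this proposition: it is recalled verbatim from \cite{DZh} with a one-line attribution (``The fusion rules for $U_{3C}$-modules are expressed in terms of admissible triples \cite{DZh}''). So your fallback option of citing \cite{DZh} is precisely what the authors do, and is the only part of your proposal that is fully sound. Your preliminary bookkeeping is also correct: the weights $0, 13/11, 50/11, 111/11, 196/11$ do identify $U(2k)$ with the $L(c_9,0)$-label $(2k+1,1)$ via Theorem \ref{3cmod}, and the family of labels $(r,1)$ with $r$ odd is closed under Wang's fusion rules by the parity constraint.

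Your constructive route through Theorem \ref{fusionrule1}, however, has a genuine gap. To obtain the coefficients $N_{U(2i),U(2j)}^{U(2k)}$ as the \emph{output} of that theorem you must take $U^c=U_{3C}$, hence $U=L(c_1,0)\otimes\cdots\otimes L(c_8,0)$, and the hypothesis $J_1=J$ then requires that \emph{every} irreducible module of this multi-Virasoro tensor product occur in the vacuum decomposition (\ref{E8decom}) of $V_{\sqrt{2}E_8}$. It does not: the restriction $k_j\equiv 0 \pmod 2$ means only factors $L(c_m,h^m_{r,s})$ with both $r$ and $s$ odd appear, so for instance no summand carrying $L(1/2,1/16)=L(c_1,h^1_{1,2})$ in the first slot occurs, and $J_1\subsetneq J$. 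Your proposed verification of $J_1=J$ misreads the condition: $J$ indexes irreducible modules of the \emph{subalgebra} $U$, not the odd $(r,1)$-labels of $L(c_9,0)$ --- and $L(c_9,0)$ is not among the coset factors $L(c_1,0),\dots,L(c_8,0)$ at all; it sits inside $U_{3C}$ itself, so the ``corresponding Virasoro primaries'' you want to fuse are not visible to this coset pair. If you instead orient the coset as $U=U_{3C}$, then $J_1=J$ does hold (all five $U(2k)$ occur in (\ref{E8decom})), but Theorem \ref{fusionrule1} then consumes the numbers $N_{U(2i),U(2j)}^{U(2k)}$ as input and produces fusion rules of the commutant --- circular for your purpose. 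As written, the argument therefore does not establish the proposition, and the honest completion is the direct citation to \cite{DZh}, which is what the paper itself does.
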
\label{3Cfusion}
As in equation (\ref{U2k}), all inequivalent irreducible $L(21/22,0)\oplus L(21/22, 8)$-modules are denoted by $\mathcal{M}_{i,l}$, where $i\in \{0,1,2,3,4\}$ and $l\in \{0,1,2\}$. Then we have the following:
\begin{corollary}  The fusion rules of $\mathcal M$-modules are:
	\begin{align*}
		&\mathcal{M}_{i,0}\boxtimes \mathcal{M}_{j,l}=\oplus_k \mathcal{M}_{k,l},\quad \quad \mathcal{M}_{i,1}\boxtimes \mathcal{M}_{j,1}=\oplus_k \mathcal{M}_{k,0},\\
		&\mathcal{M}_{i,1}\boxtimes \mathcal{M}_{j,2}=\oplus_k \mathcal{M}_{k,2},\quad \quad \mathcal{M}_{i,2}\boxtimes \mathcal{M}_{j,2}=\oplus_k \mathcal{M}_{k,0} \bigoplus \oplus_k \mathcal{M}_{k,1},
	\end{align*}
	where the summation over $k$ is for $i, j, k \in \{0,1,2,3,4\}$ satisfying $((2i-1,1), (2j-1,1),(2k-1,1))$ is an admissible triple.
\end{corollary}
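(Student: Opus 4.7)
The plan is to derive the corollary as an immediate application of Theorem \ref{fusionrule1} to the dual pair $U = L(\tfrac{1}{2},0)$, $U^c = \mathcal{M}$ sitting inside $V = U_{3C}$, combined with the $U_{3C}$-fusion rules just recalled and the classical Ising fusion rules.

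First I would verify that this triple satisfies the standing assumptions of Section 4. Rationality, $C_2$-cofiniteness, CFT-type and $L(1)V_1=0$ are classical for $L(\tfrac{1}{2},0)$, are established in \cite{LYY} for $U_{3C}$, and for $\mathcal{M}$ follow from its realization in \cite{DLin2, DZh} as a simple-current extension of the $c=21/22$ minimal model. Positivity of non-vacuum conformal weights is read off from Theorem \ref{3cmod} together with the Ising weights $\{0,\tfrac{1}{2},\tfrac{1}{16}\}$. The double-commutant identity $U = U^{cc}$ reflects the fact, used already in the proof of Theorem \ref{c9unitary}, that $L(\tfrac{1}{2},0)$ and $\mathcal{M}$ form a mutual commutant pair inside $U_{3C}$. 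Finally, the vacuum summand $U(0) = U_{3C}$ displayed before Theorem \ref{3cmod} contains all three Ising modules as left tensor factors, so $J_1 = J = \{0,1,2\}$ in the notation of Section 4, and the hypothesis of Proposition \ref{simpleobjects} (hence of Theorem \ref{fusionrule1}) is met.

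With these checks in hand, Theorem \ref{fusionrule1} applied with $M^i = U(2i)$, with the three $W^\alpha$ playing the role of the Ising modules, and with $M^{(i,l)} = \mathcal{M}_{i,l}$, factorizes every fusion coefficient as
$$N_{\mathcal{M}_{i,l}, \mathcal{M}_{j,l'}}^{\mathcal{M}_{k,l''}} = N_{U(2i), U(2j)}^{U(2k)} \cdot N_{L(1/2,h_l), L(1/2,h_{l'})}^{L(1/2,h_{l''})}, \qquad h_0 = 0,\; h_1 = \tfrac{1}{2},\; h_2 = \tfrac{1}{16}.$$
The first factor equals $1$ precisely when $((2i-1,1),(2j-1,1),(2k-1,1))$ is admissible and $0$ otherwise, by the $U_{3C}$-fusion proposition above. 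The second factor is the classical Ising table: trivial against $L(\tfrac{1}{2},0)$, $L(\tfrac{1}{2},\tfrac{1}{2})\boxtimes L(\tfrac{1}{2},\tfrac{1}{2}) = L(\tfrac{1}{2},0)$, $L(\tfrac{1}{2},\tfrac{1}{2})\boxtimes L(\tfrac{1}{2},\tfrac{1}{16}) = L(\tfrac{1}{2},\tfrac{1}{16})$, and $L(\tfrac{1}{2},\tfrac{1}{16})\boxtimes L(\tfrac{1}{2},\tfrac{1}{16}) = L(\tfrac{1}{2},0)\oplus L(\tfrac{1}{2},\tfrac{1}{2})$. Substituting the four non-vanishing Ising products into the factorization reproduces termwise the four formulas in the statement.

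I do not anticipate any genuine obstacle beyond bookkeeping: the entire analytic content is carried by Theorem \ref{fusionrule1}, the two component fusion systems decouple cleanly as a tensor product, and the only hypotheses to verify, namely the Section 4 framework and the equality $J_1 = J$, are already visible in the material established earlier in the paper.
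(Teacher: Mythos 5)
Your proposal is correct and follows essentially the same route as the paper: the paper likewise obtains the corollary by combining Theorem \ref{fusionrule1} (applied to the pair $L(\frac{1}{2},0)$, $\mathcal{M}$ inside $U_{3C}$) with the $U_{3C}$-fusion rules and the Ising fusion rules. The only difference is that you spell out the verification of the Section 4 hypotheses and of $J_1=J$, which the paper leaves implicit; these checks are accurate and consistent with the decomposition in equation (\ref{U2k}).
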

\begin{proof}
	This is a direct consequence of the fusion rules of $L(1/2,0)$-modules (see \cite{DMZ,W}), the fusion rules of irreducible $U_{3C}$-modules given in the previous proposition and Theorem \ref{fusionrule1}.
\end{proof}

\section*{Acknowledgments}

We thank Tiziano Gaudio for his valuable communication and helpful comments.


\end{document}